\newcommand{\R}{\ensuremath{\mathbb{R}}}
\newcommand{\IR}{\mathbb{IR}}
\newcommand{\inter}[1]{\left[#1\right]}
\newcommand{\lb}[1]{\underline{#1}}
\newcommand{\ub}[1]{\overline{#1}}
\newcommand{\wid}[1]{\text{wid}(#1)}
\newcommand{\midpoint}[1]{\text{mid}(#1)}
\newcommand{\fdef}{\ensuremath{f:\mathcal{D} \subseteq\R^m\rightarrow\R^n}}
\DeclareMathOperator{\range}{range}
\DeclareMathOperator{\diag}{Diag}
\DeclareMathOperator{\offdiag}{OffDiag}
\DeclareMathOperator{\interior}{int}
\newcommand{\va}{{\bf a}}
\newcommand{\vb}{{\bf b}}
\newcommand{\vf}{{\bf f}}
\newcommand{\vu}{{\bf u}}
\newcommand{\vv}{{\bf v}}
\newcommand{\vx}{{\bf x}}
\newcommand{\vy}{{\bf y}}
\newcommand{\vt}{{\bf t}}
\newcommand{\vz}{{\bf z}}
\newcommand{\vJ}{{\bf J}}
\newcommand{\vA}{{\bf A}}
\newcommand{\vB}{{\bf B}}
\newcommand{\fr}{{f_{1:r}}}
\newcommand{\eg}{\textit{e.g.}}
\newcommand{\num}[2]{
	\ensuremath{
		{#1:#2}
	}
}
\author{
	Olivier \textsc{Mullier} \and
	Éric \textsc{Goubault} \and
	Michel \textsc{Kieffer} \and
	Sylvie \textsc{Putot}
}
\title{General Inner Approximation of Vector-valued Functions}
\author{Olivier Mullier, Éric Goubault, Michel Kieffer and Sylvie Putot}
\institute{
  CEA Saclay Nano-INNOV,
  CEA LIST,
  Laboratory for the Modelling and Analysis of Interacting Systems,
  Point Courrier 174,
  91191 Gif sur Yvette CEDEX,
  \{Eric.Goubault,Sylvie.Putot\}@cea.fr
}
\begin{document}
%************************************************************************************************************************************************************************
%\begin{spacing}{2}
\maketitle
\begin{abstract}
	This paper addresses the problem of evaluating a subset of the range of a vector-valued function.
	It is based on a work by Goldsztejn and Jaulin which provides methods based on interval analysis to address this problem when the dimension of the domain and co-domain of the function are equal.
	This paper extends this result to vector-valued functions with domain and co-domain of different dimensions.
	This extension requires the knowledge of the rank of the Jacobian function on the whole domain.
	This leads to the sub-problem of extracting an interval sub-matrix of maximum rank from a given interval matrix.
	Three different techniques leading to approximate solutions of this extraction are proposed and compared.
\end{abstract}		
	
%************************************************************************************************************************************************************************
\section{Introduction}
%************************************************************************************************************************************************************************
	Computing the values a function can take over some domain is generally of great interest in the analysis of numerical programs as in abstract interpretation~\cite{Cousot1977}, in robust control of dynamic systems~\cite{jaulin1996guaranteed}, or in global optimization~\cite{neumaier2004complete}.
	Computing the image of a domain by a function (also called direct image or \emph{range}) exactly is intractable in general.
	The classical solution is then to compute an outer approximation of this range, which can unfortunately be very pessimistic.
	This outer approximation may introduce many values which do not belong to the range.
	Providing, in addition, an inner approximation can be helpful to state the quality of the outer approximation.
	
	For scalar-valued functions, an inner approximation can be evaluated using modal intervals~\cite{HerreroSVJ05} (using Kaucher arithmetic~\cite{kaucher1980interval})  or twin arithmetic~\cite{nesterov1997}.
	When $f$ maps $\R^n$ to $\R^n$, modal intervals~\cite{goldsztejn2005} can also be used in the linear case.
	For the non-linear case, set inversion~\cite{jaulin2001applied} can be used when $f$ is \emph{globally} invertible (when an inverse function $f^{-1}$ can be produced). 
		
	In the more general case of $f$ being \emph{locally} invertible, the method described by Goldsztejn and Jaulin in \cite{Goldsztejn2010} can be applied.
	This technique requires however the inverse of the Jacobian of $f$.
	Thus it can only be applied for functions from $\R^n$ to $\R^n$ of constant rank $n$.
	
	This paper proposes a generalization of the method in~\cite{Goldsztejn2010} to deal with functions $f$ from $\mathcal{D} \subseteq \R^m$ to $\R^n$, with $m \neq n$, with rank $r$.
	It describes a method to compute an inner approximation for at most $r$ components of $f$.
	As in \cite{Goldsztejn2010}, the evaluation of the Jacobian of the function on a given subset of its domain is needed.
	There, the identification of the components that can be used to compute an inner approximation has to be done by extracting the sub-matrix of full rank in its Jacobian.
	Checking regularity of interval matrices is a NP-hard problem~\cite{poljak1993checking}, so is the problem of extracting an interval sub-matrix of full rank.
	To our knowledge, no necessary and sufficient condition for checking regularity can be used to address this problem (a list of necessary and sufficient conditions for an interval matrix to be regular can be found in~\cite{rohn1989systems}).
	
	This paper is organized as follows: Section~\ref{sec:gold} recalls the main result of \cite{Goldsztejn2010} on the computation of an inner approximation of the range of vector-valued functions with domain and co-domain of the same dimension. 
	Section~\ref{sec:extension} describes the extension of this result to functions with domain and co-domain of different dimensions.
	Section~\ref{sec:algo} addresses algorithms for computing and inner approximation and describes how sub-matrices of full rank can be extracted from a given interval matrix using different techniques.
	The computation of an inner approximation of the range of functions is illustrated on examples in Section~\ref{sec:app}.

%************************************************************************************************************************************************************************	
\subsection*{\textbf{Notations}}
%************************************************************************************************************************************************************************
	$\vx = \inter{\lb{x},\ub{x}} \triangleq \{x \in \R : \lb{x} \leqslant x \leqslant \ub{x}\}$ is an \emph{interval} where $\lb{x}$ and $\ub{x}$ are respectively its lower and its upper bound.	
	$\IR \triangleq \{\inter{\lb{x},\ub{x}} :  \lb{x}, \ub{x} \in \R,  \lb{x} \leqslant \ub{x}\}$ represents the set of intervals.
	A box is the Cartesian product of $n$ intervals in $\IR^n$.
	For an interval $\vx = \inter{\lb{x}, \ub{x}}$, the \emph{width} is $\wid{\vx} \triangleq \ub{x} - \lb{x}$, the \emph{midpoint} is $\midpoint{\vx} \triangleq \frac{1}{2}(\ub{x} + \lb{x})$, the \emph{interior} is $\interior(\vx) \triangleq \{x \in \R | \lb{x} < x < \ub{x} \}$, and the \emph{boundary} is denoted by $\partial \vx$.
	The \emph{magnitude} is denoted $|\vx| \triangleq \max\{|\lb{x}|, |\ub{x}|\}$ and the \emph{mignitude} is $\langle \vx \rangle  \triangleq \min\{|\lb{x}|, |\ub{x}|\}$ if $0 \notin \vx$ and $\langle \vx \rangle = 0$ otherwise.
	The width of an interval vector $\vx \in \IR^n$ is $\max_{1 \leqslant i \leqslant n}(\wid{\vx_i})$.
	
	The core of interval analysis is its fundamental theorem (see, \eg, \cite{moore1966} or \cite{neumaier1990}) asserting that an evaluation of an expression using intervals gives an outer approximation of the range of this expression over the considered intervals.
	An interval function is an inclusion function denoted here $\vf$:  $f(\vx) = \{ f(x): x \in \vx \} \subseteq \vf(\vx)$ for $\vx$ included in the domain of $f$. 
	For an interval square matrix $\vA \in \IR^{n \times n}$, $\diag \vA \in \IR^{n \times n}$ is the diagonal interval matrix whose diagonal entries are $(\diag \vA)_{ii} = \vA_{ii}$, $1 \leqslant i \leqslant n$, and 0 elsewhere.
	$\offdiag \vA \in \IR^{n \times n}$ is the interval matrix with null diagonal and with off-diagonal entries such that $(\offdiag \vA)_{ij} = \vA_{ij}$.	
	For a vector-valued function $\fdef$ and $x \in \mathcal{D}$, $f_{\num{i}{j}}(x)\triangleq (f_i(x), f_{i+1}(x), \dots, f_j(x))^T$ for $i \leqslant j$.
	For the Jacobian $J^f$ of $f$ and $x \in \mathcal{D}$, 
	\begin{equation*}
	J^{f_\num{i}{j}, x_\num{k}{\ell}}(x) \triangleq
	\left(
		\begin{array}{cccc}
			\frac{\partial f_i}{\partial x_k}(x)	&	\frac{\partial f_i}{\partial x_{k+1}}(x)	&	\dots & \frac{\partial f_i}{\partial x_\ell}(x)\\
			\frac{\partial f_{i+1}}{\partial x_k}(x)	&	\frac{\partial f_{i+1}}{\partial x_{k+1}}(x)	&	\dots & \frac{\partial f_{i+1}}{\partial x_\ell}(x)\\			
			\vdots											&	\vdots													&	\vdots\\
			\frac{\partial f_j}{\partial x_k}(x)	&	\frac{\partial f_j}{\partial x_{k+1}}(x)	&	\dots & \frac{\partial f_j}{\partial x_\ell}(x)			
		\end{array}
	\right)
	\end{equation*}	
	is the restriction of the Jacobian of $f$ for $j-i$ components of $f$ and $\ell-k$ components of $x$.
	$I_k \in \R^{k\times k}$ is the identity matrix of dimension $k$.
	The null matrix with $k$ rows and $\ell$ columns is denoted $0_{k\times \ell}$ and the null vector of $k$ entries is denoted $0_k  \triangleq(0, \dots, 0)^T$.

%************************************************************************************************************************************************************************	
\section{Inner approximation for functions with domain and co-domain of the same dimension}
%************************************************************************************************************************************************************************
\label{sec:gold}
	This section recalls the main result of \cite{Goldsztejn2010} to evaluate an inner approximation of the range of a function with domain and co-domain of the same dimension.
\begin{corollary}
	\label{cor:gold}
	Let $\vx \in \IR^n$ and $f : \vx \rightarrow \R^n$ be a continuous function continuously differentiable in $\interior(\vx)$.
	Consider $\vy \in \IR^n$ and $\tilde{x} \in \vx$ such that $f(\tilde{x}) \in \vy$.
	Consider also an interval matrix $\vJ \in \IR^{n\times n}$ such that  $f'(x) \in \vJ$ for all $x \in \vx$.
	Assume that $0 \notin \vJ_{ii}$ for all $i \in \inter{1,\dots,n}$. 
	Let
	\begin{equation}
		H(\vJ, \tilde{x}, \vx, \vy) = \tilde{x} + (\diag^{-1} \vJ)\Big( \vy - f(\tilde{x}) - (\offdiag \vJ)(\vx - \tilde{x})  \Big)
		\label{eq:hf}
	\end{equation}
	If $H(\vJ, \tilde{x}, \vx, \vy) \subseteq \interior(\vx)$ then $\vy \subseteq \range(f, \vx)$.		
\end{corollary}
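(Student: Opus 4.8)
\smallskip
\noindent\emph{Proof idea.}\
The plan is to prove, for one $y\in\vy$ at a time, that there exists $x\in\vx$ with $f(x)=y$, via a Brouwer fixed-point argument applied to a preconditioned mean-value operator. Fix $y\in\vy$. Since $\vx$ is convex and $\tilde{x}\in\vx$, for every $x\in\vx$ the segment $\tilde{x}+t(x-\tilde{x})$ ($t\in[0,1]$) lies in $\vx$, so I would work with the \emph{averaged} Jacobian $\bar{J}(x)\in\R^{n\times n}$, $\bar{J}_{ij}(x)\triangleq\int_0^1\frac{\partial f_i}{\partial x_j}\big(\tilde{x}+t(x-\tilde{x})\big)\,dt$, rather than with the non-canonical mean value points. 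The fundamental theorem of calculus applied componentwise along the segment gives $f_i(x)-f_i(\tilde{x})=\sum_{j=1}^n\bar{J}_{ij}(x)(x_j-\tilde{x}_j)$; and since each $\vJ_{ij}$ is a closed (hence convex) interval containing $\partial f_i/\partial x_j$ on all of $\vx$, averaging yields $\bar{J}_{ij}(x)\in\vJ_{ij}$ for all $x\in\vx$. In particular $\bar{J}_{ii}(x)\in\vJ_{ii}$ is nonzero (as $0\notin\vJ_{ii}$), so $x\mapsto\bar{J}(x)$ and $x\mapsto 1/\bar{J}_{ii}(x)$ are continuous on $\vx$.

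I would then introduce the continuous map $P:\vx\to\R^n$ with
\begin{equation*}
	P_i(x)=\tilde{x}_i+\frac{1}{\bar{J}_{ii}(x)}\Big(y_i-f_i(\tilde{x})-\sum_{j\neq i}\bar{J}_{ij}(x)(x_j-\tilde{x}_j)\Big),
\end{equation*}
and prove $P(\vx)\subseteq\interior(\vx)$. This is the step in which all the hypotheses enter: for fixed $x\in\vx$ we have $1/\bar{J}_{ii}(x)\in 1/\vJ_{ii}=(\diag^{-1}\vJ)_{ii}$, $y_i-f_i(\tilde{x})\in\vy_i-f_i(\tilde{x})$, $\bar{J}_{ij}(x)\in\vJ_{ij}=(\offdiag\vJ)_{ij}$ for $j\neq i$, and $x_j-\tilde{x}_j\in\vx_j-\tilde{x}_j$, so by inclusion monotonicity of interval arithmetic $P_i(x)$ lies in the $i$-th component of $H(\vJ,\tilde{x},\vx,\vy)$ from~\eqref{eq:hf}. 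The assumption $H(\vJ,\tilde{x},\vx,\vy)\subseteq\interior(\vx)$ then forces $P(x)\in\interior(\vx)\subseteq\vx$.

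Consequently $P$ maps the nonempty compact convex box $\vx$ continuously into itself, so Brouwer's fixed-point theorem supplies $x^\star\in\vx$ with $P(x^\star)=x^\star$. Clearing the denominator in $x^\star_i=P_i(x^\star)$ gives $\sum_{j=1}^n\bar{J}_{ij}(x^\star)(x^\star_j-\tilde{x}_j)=y_i-f_i(\tilde{x})$ for every $i$, hence $f_i(x^\star)-f_i(\tilde{x})=y_i-f_i(\tilde{x})$ by the identity above, i.e. $f(x^\star)=y$. Since $x^\star\in\vx$ this means $y\in\range(f,\vx)$, and as $y\in\vy$ was arbitrary, $\vy\subseteq\range(f,\vx)$.

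The step I expect to be the genuine obstacle is the behaviour of $\bar{J}$ on $\partial\vx$: with $f$ only assumed continuously differentiable in $\interior(\vx)$, the integral defining $\bar{J}_{ij}(x)$ and the mean-value identity need justification when the segment $[\tilde{x},x]$ touches the boundary (the partials need not stay bounded there). I would handle this either by the customary strengthening that $f$ is $C^1$ on a neighbourhood of $\vx$, or by first establishing the conclusion on every slightly shrunk box on which the \emph{strict} inclusion $H\subseteq\interior(\vx)$ still persists and then passing to the limit; everything else is routine interval-arithmetic bookkeeping.
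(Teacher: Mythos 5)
Your argument is correct, but it is not the route taken here: the paper does not reprove Corollary~\ref{cor:gold} at all (it is recalled from Goldsztejn--Jaulin), and the proof machinery it does develop for the analogous generalized statement (Theorem~\ref{theo:th} and Corollary~\ref{cor:rang}) is topological rather than fixed-point based. There, one first proves a set-theoretic inclusion test -- if $\vy$ misses $f(\partial\vx)$ and contains one value $f(\tilde{x})$, then $\vy\subseteq\range(f,\vx)$ -- using that the boundary of the image is contained in the image of the boundary (openness of submersions / nonvanishing Jacobian, Lemma~\ref{lem:1}) together with a path-connectedness argument; the interval operator $H$ only enters afterwards, via the mean value theorem, to show by contradiction that $H(\vJ,\tilde{x},\vx,\vy)\subseteq\interior(\vx)$ rules out any boundary point mapping into $\vy$. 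Your proof instead applies Brouwer's fixed-point theorem directly to a Jacobi/Hansen--Sengupta-type operator built from the averaged Jacobian, which is elementary, self-contained, and constructive (it produces a preimage for each $y\in\vy$ without invoking open maps or the separation argument). What the paper's decomposition buys is modularity: the topological test is stated independently of the interval arithmetic, which is precisely what lets the authors replace the square-Jacobian hypothesis by condition~\eqref{eq:inv} and treat only $r$ components in Corollary~\ref{cor:rang}. The one genuine issue in your write-up -- that $f$ is only $C^1$ on $\interior(\vx)$, so $\bar{J}$ and the segment identity are not immediately available up to $\partial\vx$ -- is real but repairable exactly as you indicate; note moreover that $f(\tilde{x})\in\vy$ and $\tilde{x}\in\vx$ force $\tilde{x}\in H(\vJ,\tilde{x},\vx,\vy)\subseteq\interior(\vx)$, so a single sufficiently small shrink of $\vx$ keeps $\tilde{x}$, keeps $P$ mapping the shrunk box into itself, and no limiting process is needed.
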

	This corollary provides an efficient test for a box $\vy$ to be a subset of the range of a vector-valued function.
	It can be used to compute an inner approximation of functions $f$ from $\R^n$ to $\R^n$, see Section~\ref{sec:algo}.
	The restriction on $f$ having same dimension of domain and co-domain comes from the matrix inversion of $\diag \vJ$ in \eqref{eq:hf}.
\begin{figure}
	\centering 
	\def\svgwidth{\columnwidth} 
	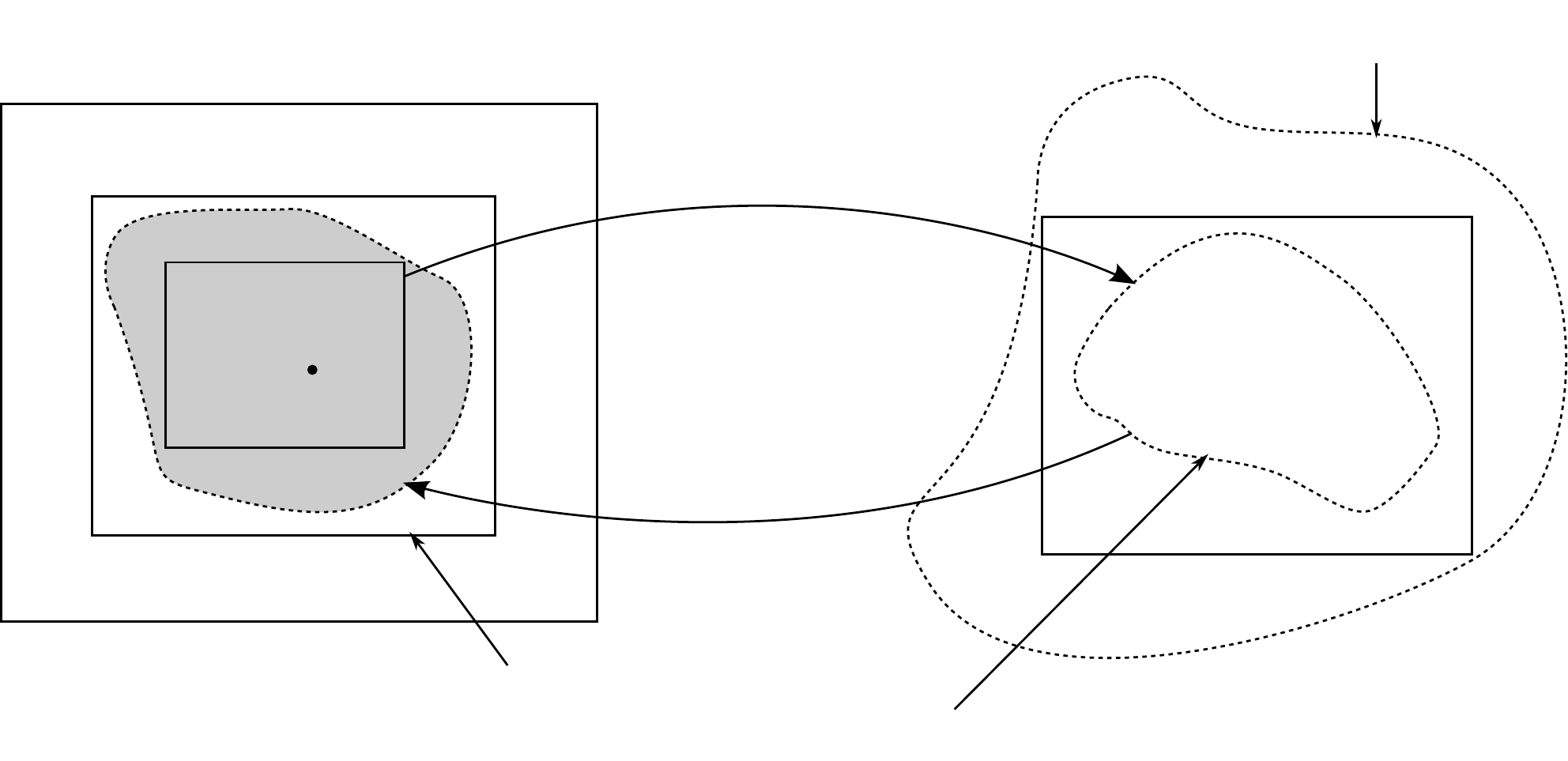
	\caption{Sets and functions involved in Corollary~\ref{cor:gold} for inner approximation.}
	\label{fig:exp}
\end{figure} 
	Figure~\ref{fig:exp} illustrates the computation in Corollary~\ref{cor:gold}.
	The left part of Figure~\ref{fig:exp} represents the domain $\vx$ and the right part the co-domain of $f$.
	The set-valued map $f_S$ is defined from $\mathcal{P}(\R^n)$ (power set of $\R^n$) to $\mathcal{P}(\R^n)$ and returns the set $\{ f(x) : x \in \mathcal{D} \}$ for a given set $\mathcal{D}$, see~\cite{aubin2008set}.
	From a given box $\tilde{\vx} \subset \vx$, one wants to know if the box $\tilde{\vy}$ computed by an inclusion function of $f$ over $\tilde{\vx}$ belongs to the range of $f$ or equivalently if $\tilde{\vy} = \vf(\tilde{\vx})$ is a subset of $f_S(\vx)$.
	If $\tilde{\vx}$ is too large compared to $\vx$, one might have $\tilde{\vy} = \vf(\tilde{\vx}) \nsubseteq f_S(\vx)$.
	To prove that $\tilde{\vy} \subset f_S(\vx)$, it is sufficient to prove that $f^{-1}_S(\tilde{\vy}) \subset {\vx}$.
	The function $H(\vJ, \tilde{x}, \vx, \vy)$ in \eqref{eq:hf} can be seen as an inclusion function for $f^{-1}_S\circ f_S(\vx)$.	

%************************************************************************************************************************************************************************	
\section{Extension for functions with domain and co-domain of different dimensions}
%************************************************************************************************************************************************************************
\label{sec:extension}
	Corollary~\ref{cor:gold} only applies for functions having the same dimension for domain and co-domain.
	It also needs that the determinant of the Jacobian is different from 0.
	Consider now the case of a function $f$ with domain and co-domain of different dimensions.
	In what follows, assume that $\fdef$ is a $C^1$ function of rank greater than or equal to $r$ on $\mathcal{D}$.
	It is assumed that there exist $r$ components $(x_{i_1},\dots,x_{i_r})$ of $x = (x_1, \dots, x_m) \in \mathcal{D} \subseteq\R^m$ and $r$ components $(f_{j_1}(x), \dots, f_{j_r}(x))$ such that 
	\begin{equation}
		\label{eq:detRank}
		\forall x\in\mathcal{D} \subseteq\R^m,\
		\det
		\left(
			\frac{\partial f_{j_k}}{\partial x_{i_\ell}}(x)
		\right)_{1\leqslant k, \ell \leqslant r}
		\neq 0	
	\end{equation}
	Hereafter, without loss of generality, $f$ is considered after the permutation of the $r$ coordinates $(x_{i_1},\dots,x_{i_r})$ and the $r$ coordinates $(f_{j_1}(x), \dots, f_{j_r}(x))$ (this permutation is discussed later in Section~\ref{sub:perm}).
	It means that the Jacobian of $f$ has an $r \times r$ sub-matrix on the upper left such that 
	\begin{equation}
		\forall x\in\mathcal{D} \subseteq\R^m,\ \det
		\left(
			\frac{\partial f_j}{\partial x_{k}}(x)
		\right)_{1\leqslant j,k \leqslant r}
		\neq 0	
		\label{eq:inv}
	\end{equation}	
	
	Theorem~3.1 in \cite{Goldsztejn2010} provides sufficient conditions for a box $\vy$ to be included in the range of a function.
	Theorem~\ref{theo:th} bellow generalizes this characterization by providing sufficient conditions for a box $\vy_1 \in \IR^r$ to be inside the projection on the first $r$ components of the image of $\fdef$ when $f$ verifies \eqref{eq:inv}.
	\begin{theorem}
\label{theo:th}
	Let $f: \mathcal{D} \subseteq \R^m \rightarrow \R^n$ be a $C^1$ function that verifies \eqref{eq:inv}, $\vu \subset \mathcal{D}$ a box in $\IR^m$ and $\vy_1 \in \IR^r$.
	Assume that the two following conditions are satisfied
	\begin{enumerate}\renewcommand{\labelenumi}{(\roman{enumi})}
		\item $\vy_1 \cap \fr(\partial \vu) = \emptyset$;\label{cond:1}
		\item $\fr(\tilde{u}) \in \vy_1$ for some $\tilde{u} \in \vu$,\label{cond:2}
	\end{enumerate}
	then $\vy_1 \subseteq \fr(\vu)$.
\end{theorem}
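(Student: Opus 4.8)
The plan is to reduce the statement to the equal‑dimensional case, which is already handled by Theorem~3.1 of~\cite{Goldsztejn2010} (recalled here through Corollary~\ref{cor:gold}), by \emph{freezing} the last $m-r$ coordinates at the values taken by the witness point $\tilde u$. First I would observe that hypotheses~(i) and~(ii) together force $\tilde u \in \interior(\vu)$: if $\tilde u \in \partial\vu$ then $\fr(\tilde u) \in \fr(\partial\vu)$, which contradicts $\fr(\tilde u) \in \vy_1$ because $\vy_1 \cap \fr(\partial\vu) = \emptyset$. Writing $\tilde u = (\tilde u_1,\dots,\tilde u_m)$ and letting $\vu_{\num{1}{r}} = \vu_1 \times \cdots \times \vu_r \in \IR^r$ be the projection of the box $\vu$ onto its first $r$ coordinates, the fact that $\vu$ is a Cartesian product together with $\tilde u_i \in \vu_i$ for $i>r$ guarantees that the slice $\vu_{\num{1}{r}} \times \{\tilde u_{r+1}\} \times \cdots \times \{\tilde u_m\}$ stays inside $\vu \subseteq \mathcal{D}$. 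Hence the map
\[
  h : \vu_{\num{1}{r}} \longrightarrow \R^r, \qquad h(z) = \fr\big(z_1,\dots,z_r,\tilde u_{r+1},\dots,\tilde u_m\big),
\]
is well defined, continuous on $\vu_{\num{1}{r}}$ and continuously differentiable on $\interior(\vu_{\num{1}{r}})$, with Jacobian $h'(z) = J^{f_\num{1}{r}, x_\num{1}{r}}(z_1,\dots,z_r,\tilde u_{r+1},\dots,\tilde u_m)$, which is invertible everywhere by~\eqref{eq:inv}.

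Next I would check that $h$, together with the box $\vu_{\num{1}{r}}$ and $\vy_1$, satisfies the hypotheses of the equal‑dimensional theorem. The crucial point is that $h(\partial\vu_{\num{1}{r}}) \subseteq \fr(\partial\vu)$: a point of $\partial\vu_{\num{1}{r}}$ has at least one of its $r$ coordinates at an endpoint of the corresponding $\vu_i$, so the associated slice point lies on $\partial\vu$. Combined with hypothesis~(i) this gives $\vy_1 \cap h(\partial\vu_{\num{1}{r}}) = \emptyset$, while hypothesis~(ii) reads $h(\tilde u_1,\dots,\tilde u_r) = \fr(\tilde u) \in \vy_1$ with $(\tilde u_1,\dots,\tilde u_r) \in \interior(\vu_{\num{1}{r}})$ by the first step. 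Applying Theorem~3.1 of~\cite{Goldsztejn2010} to $h$ then yields $\vy_1 \subseteq h(\vu_{\num{1}{r}})$, and since $h(\vu_{\num{1}{r}}) \subseteq \fr(\vu)$ by construction, we conclude $\vy_1 \subseteq \fr(\vu)$.

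If a self‑contained argument is preferred, the equal‑dimensional step can be carried out directly on $h$. Since $h$ has everywhere invertible Jacobian, it is an open map on $\interior(\vu_{\num{1}{r}})$ by the inverse function theorem, so $V := \vy_1 \cap h(\interior(\vu_{\num{1}{r}}))$ is open in $\vy_1$ and nonempty (it contains $\fr(\tilde u)$); it is also closed in $\vy_1$, for if $h(z_k)\to y\in\vy_1$ with $z_k\in\interior(\vu_{\num{1}{r}})$ then by compactness a subsequence of $(z_k)$ converges to some $z\in\vu_{\num{1}{r}}$ with $h(z)=y$, and $z\in\partial\vu_{\num{1}{r}}$ is impossible because it would put $y=h(z)\in\fr(\partial\vu)$, contradicting $y\in\vy_1$ and~(i); hence $z\in\interior(\vu_{\num{1}{r}})$ and $y\in V$. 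As $\vy_1$ is a box it is connected, so $V=\vy_1$, i.e. $\vy_1\subseteq h(\vu_{\num{1}{r}})\subseteq\fr(\vu)$. I expect the main obstacle to be the boundary bookkeeping — establishing $h(\partial\vu_{\num{1}{r}}) \subseteq \fr(\partial\vu)$ and making sure the frozen coordinates keep the whole slice inside $\mathcal{D}$ — which is exactly where the box structure of $\vu$ and the interiority of $\tilde u$ deduced from (i)–(ii) are used.
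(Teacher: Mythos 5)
Your proof is correct, but it takes a genuinely different route from the paper. The paper proves an auxiliary lemma stating that $\partial\left(\fr(E)\right) \subseteq \fr(\partial E)$ for compact $E$ (using that \eqref{eq:inv} makes $\fr$ a submersion, hence an open map), and then runs a path-connectedness argument in the codomain: any point of $\vy_1$ outside $\fr(\vu)$ would be joined to $\fr(\tilde u)$ by a path in $\vy_1$ crossing $\partial\left(\fr(\vu)\right)$, which is excluded by hypothesis (i). You instead freeze the last $m-r$ coordinates at $\tilde u_{r+1},\dots,\tilde u_m$ and reduce to the square case for the slice map $h$, whose Jacobian is exactly the leading $r\times r$ block of $J^f$ and hence invertible by \eqref{eq:inv}; the key bookkeeping steps (that (i)--(ii) force $\tilde u \in \interior(\vu)$, that the slice stays in $\vu$, and that $h(\partial(\vu_1\times\cdots\times\vu_r)) \subseteq \fr(\partial\vu)$) are all sound, and both your appeal to Theorem~3.1 of \cite{Goldsztejn2010} (with empty singular set $\Sigma$) and your self-contained clopen-plus-connectedness argument are valid. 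What the comparison buys: your reduction avoids both the boundary lemma for submersions and the path-crossing lemma A.1 of \cite{Goldsztejn2010}, and it actually yields the slightly stronger conclusion that $\vy_1$ is covered by the image of a single $(m-r)$-codimensional slice through $\tilde u$; on the other hand it relies essentially on the Cartesian-product structure of $\vu$, whereas the paper's Lemma~\ref{lem:1} holds for arbitrary compact subsets of $\mathcal{D}$ and keeps the argument aligned with the full-Jacobian formulation (including the block $\vJ^{\fr,u_2}$) that Corollary~\ref{cor:rang} and Algorithm~\ref{algo:inner2} subsequently exploit. One minor slip: Theorem~3.1 of \cite{Goldsztejn2010} is the topological inclusion test, not Corollary~\ref{cor:gold} (the interval-operator test recalled here), so your parenthetical identification of the two is inaccurate, though it does not affect your argument since you invoke the former.
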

Before starting the proof the next result is needed.
\begin{lemma}
\label{lem:1}
	Let $f: \mathcal{D} \rightarrow \R^n$ be a $C^1$ function satisfying \eqref{eq:inv} and let $E$ be a compact such that $E \subset \mathcal{D}$.
	Then one has $\partial( f_\num{1}{r}(E) ) \subseteq f_\num{1}{r}(\partial E)$.
\end{lemma}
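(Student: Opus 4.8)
The plan is to prove the contrapositive at the level of points: I will show that if a point $y$ of $\fr(E)$ admits a preimage in the interior of $E$ (interior taken in the ambient space $\R^m$), then $y$ lies in the interior of $\fr(E)$, hence $y \notin \partial(\fr(E))$. Equivalently, every boundary point of $\fr(E)$ has at least one preimage on $\partial E$, which immediately gives $\partial(\fr(E)) \subseteq \fr(\partial E)$.

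First I would fix $y \in \partial(\fr(E))$. Since $E$ is compact and $f$ is continuous, $\fr(E)$ is compact, hence closed, so $\partial(\fr(E)) \subseteq \fr(E)$ and there exists $x \in E$ with $\fr(x) = y$. If $\interior(E) = \emptyset$ then $\partial E = E$ and the inclusion is trivial; otherwise I argue by contradiction, assuming $x \in \interior(E)$.

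The substance of the argument is a local open-mapping property extracted from \eqref{eq:inv}. I would choose a small open ball $B \subseteq \interior(E)$ around $x = (x_1,\dots,x_m)$, contained in the region where $f$ is $C^1$, and introduce the auxiliary map $G : B \rightarrow \R^m$ defined by $G(z) = (f_1(z),\dots,f_r(z),z_{r+1},\dots,z_m)$. Its Jacobian at $x$ is block upper-triangular with diagonal blocks $J^{\fr,x_\num{1}{r}}(x)$ and $I_{m-r}$, so $\det G'(x) = \det J^{\fr,x_\num{1}{r}}(x) \neq 0$ by \eqref{eq:inv}. By the inverse function theorem $G$ is a local diffeomorphism at $x$, so $G(B)$ contains an open neighborhood of $G(x)$ in $\R^m$; composing with the (open) projection onto the first $r$ coordinates shows that $\fr(B)$, and therefore $\fr(E)$ since $B \subseteq E$, contains an open neighborhood of $\fr(x) = y$. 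This contradicts $y \in \partial(\fr(E))$. Hence $x \in \partial E$ and $y = \fr(x) \in \fr(\partial E)$, and since $y$ was an arbitrary boundary point the lemma follows.

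I expect the only genuinely essential step to be this open-mapping argument; the remaining points require care but no real difficulty: interpreting $\partial E$ and $\interior(E)$ in the ambient $\R^m$ so that the split into the first $r$ and last $m-r$ coordinates is legitimate (note $r \leqslant m$ since $r$ is a rank of an $n \times m$ matrix); keeping the ball $B$ inside the set on which $f$ is continuously differentiable; and observing that only invertibility of the $r \times r$ block $J^{\fr,x_\num{1}{r}}$ is used — the full Jacobian of $f$ need not be square and plays no role here.
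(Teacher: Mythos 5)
Your proof is correct and follows essentially the same route as the paper: compactness of $E$ gives $\partial(\fr(E)) \subseteq \fr(E)$, and an interior preimage of a boundary point is ruled out because $\fr$ is open near such a point, so every boundary point of $\fr(E)$ must come from $\partial E$. The only difference is that you establish the local openness directly, via the inverse function theorem applied to the auxiliary map $G(z)=(f_1(z),\dots,f_r(z),z_{r+1},\dots,z_m)$, whose Jacobian is invertible by \eqref{eq:inv}, whereas the paper simply cites the fact that submersions are open maps.
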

\begin{proof}
	Consider any $y_1 \in \partial( f_\num{1}{r}(E) )$.
	As $f$ is continuous, $\fr$ is continuous as well.
	Then the image of $E$, compact, by $\fr$ is also compact.
	In particular it is closed so $\partial(\fr(E) )$ is included in $\fr(E)$.
	So there exists $x \in E$ such that $y_1 = f_\num{1}{r}(x)$.
	Now suppose that $x \in \interior E$.
	We now prove that this leads to a contradiction.
	As $x \in \interior E$, there exists $U$ open of $E$ with $x \in U$.
	Because of \eqref{eq:inv}, $\fr$ is a submersion and as submersions are open maps (see \cite{tu2010introduction}), $V = f_{1:r}(U)$ is open in $\R^r$.
	We have $y_1 \in V$ then $y_1 \in \interior f_\num{1}{r}(E)$ which contradicts $y_1 \in \partial( f_\num{1}{r}(E) )$. 
	As a conclusion we have $x \in \partial E$ and eventually, $\partial( f_\num{1}{r}(E) ) \subseteq f_\num{1}{r}(\partial E)$.
\end{proof}	
\begin{proof}[Theorem~\ref{theo:th}]
	$\vu$ is a compact of $\mathcal{D}$ and $f$ is a $C^1$ function that verifies \eqref{eq:inv} so we have, from Lemma~\ref{lem:1}, $\partial( \fr(\vu) ) \subseteq \fr(\partial \vu)$.
	So $\vy_1 \cap \partial(\fr(\vu)) \subseteq \vy_1 \cap \fr(\partial \vu) =\emptyset$
therefore 
\begin{equation}
		\label{eq:empty}
		\vy_1 \cap \partial( f_\num{1}{r}(\vu) ) = \emptyset.
	\end{equation}
	The set $\fr(\vu)$ is compact because $\vu$ is compact and $\fr$ is continuous.
	Let $\tilde{u}$  and $y_1 = \fr(\tilde{u}) \in \vy_1$ be given in ($ii$).
	As the intersection of $\vy_1$ and $\partial \fr(\vu)$ is empty by \eqref{eq:empty}, $y_1 \in \interior \fr(\vu)$.
	Consider any $z \in \vy_1$ and suppose that $z \notin \fr(\vu)$.
	Since $\vy_1$ is path connected, there exists a path included in $\vy_1$ between $y_1$ and $z$ such that, by Lemma~A.1. in \cite{Goldsztejn2010}, this path intersects $\partial \fr(\vu)$ which is not possible from \eqref{eq:empty}.
	Therefore $z \in \fr(\vu)$ which concludes the proof.
\end{proof}

	Theorem~\ref{theo:th} is a generalization of Theorem~3.1 in \cite{Goldsztejn2010} for a function satisfying \eqref{eq:inv}.
	In Theorem~3.1 in \cite{Goldsztejn2010}, the set $\Sigma = \{ x \in \interior{\vx}\ |\ \det f'(x) = 0 \}$ can be extended for $\fr$ by $\Sigma_2 = \{ x \in \interior{\vx}\ |\ \text{rank}(\fr(x)) < r\}$.
	Due to \eqref{eq:inv}, one has $\Sigma_2 = \emptyset$.
	In what follows, Corollary~\ref{cor:rang} of Theorem~\ref{theo:th}, which extends the inclusion test of Corollary~\ref{cor:gold}, is introduced.
	\begin{corollary}
	\label{cor:rang}
	Let $f:\mathcal{D} \subseteq\R^m\rightarrow\R^n$ be a $C^1$ function that satisfies \eqref{eq:inv} and $\vu = (\vu_1, \vu_2)\in \IR^r\times\IR^{m-r}$. 
	Consider $\vy_1 \in \IR^r$, $\tilde{u} = (\tilde{u}_1, \tilde{u}_2) \in (\vu_1, \vu_2)$ such that $\fr(\tilde{u}) \in \vy_1$ and $\vJ^\fr = \left( \vJ^{\fr, u_1}\ \vJ^{\fr, u_2} \right)\in \IR^{r\times m}$ an interval matrix containing $J^\fr$ the Jacobian of $\fr$ on $(\vu_1, \vu_2)$ such that $0 \notin \left(\vJ^{\fr, u_1}\right)_{ii}$ for $1 \leqslant i \leqslant r$. 
	Let
	\begin{eqnarray*}
		&&H_\fr(\vJ^\fr, \tilde{u}, \vu, \vy_1) = \tilde{u}_1 + (\diag^{-1} \vJ^{\fr, u_1}) \times\\
		 &&\Big(\vy_1 - \fr(\tilde{u}) - (\offdiag \vJ^{\fr, u_1})(\vu_1 - \tilde{u}_1) - \vJ^{\fr, u_2}(\vu_2 - \tilde{u}_2)\Big).
	\end{eqnarray*}
	If
	\begin{equation}
		H_\fr(\vJ^\fr, \tilde{u}, \vu, \vy_1) \subseteq \interior(\vu_1),
		\label{eq:rang}
	\end{equation}
	then
	\begin{equation*}
			\vy_1\subseteq \fr(\vu).
	\end{equation*}
\end{corollary}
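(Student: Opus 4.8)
The plan is to reduce the statement to the equal–dimension setting of Corollary~\ref{cor:gold} by \emph{freezing} the last $m-r$ coordinates of the domain at $\tilde{u}_2$. To this end, introduce the slice map
\[
	g : \vu_1 \rightarrow \R^r, \qquad g(v) \triangleq \fr(v,\tilde{u}_2).
\]
Since $f$ is $C^1$ on $\mathcal{D}$ and $\vu_1\times\{\tilde{u}_2\}\subseteq\vu\subseteq\mathcal{D}$, the map $g$ is continuous on $\vu_1$, continuously differentiable on $\interior(\vu_1)$, and $g(\tilde{u}_1)=\fr(\tilde{u})\in\vy_1$. Its Jacobian at $v$ is $J^{\fr,u_1}(v,\tilde{u}_2)$, which lies in $\vJ^{\fr,u_1}$ for every $v\in\vu_1$ because $(v,\tilde{u}_2)\in\vu$ and $\vJ^\fr$ encloses $J^\fr$ on $\vu$; moreover $0\notin(\vJ^{\fr,u_1})_{ii}$ by hypothesis. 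Hence $g$, the box $\vu_1$, the point $\tilde{u}_1$, the target $\vy_1$ and the enclosure $\vJ^{\fr,u_1}$ fulfil all the assumptions of Corollary~\ref{cor:gold}.

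Next I would transfer the contraction hypothesis \eqref{eq:rang}. Let $H$ be the operator of \eqref{eq:hf}; applied to $g$ it reads $H(\vJ^{\fr,u_1},\tilde{u}_1,\vu_1,\vy_1)$, which is obtained from $H_\fr(\vJ^\fr,\tilde{u},\vu,\vy_1)$ by removing the term $\vJ^{\fr,u_2}(\vu_2-\tilde{u}_2)$ inside the parentheses. Because $\tilde{u}_2\in\vu_2$, the interval vector $\vu_2-\tilde{u}_2$ contains $0$, hence so does $\vJ^{\fr,u_2}(\vu_2-\tilde{u}_2)$; adding an interval vector that contains $0$ can only enlarge an interval vector, so by inclusion isotonicity of interval arithmetic $H(\vJ^{\fr,u_1},\tilde{u}_1,\vu_1,\vy_1)\subseteq H_\fr(\vJ^\fr,\tilde{u},\vu,\vy_1)$. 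Then \eqref{eq:rang} yields $H(\vJ^{\fr,u_1},\tilde{u}_1,\vu_1,\vy_1)\subseteq\interior(\vu_1)$, and Corollary~\ref{cor:gold} gives $\vy_1\subseteq\range(g,\vu_1)$. Since $\range(g,\vu_1)=\fr\bigl(\vu_1\times\{\tilde{u}_2\}\bigr)\subseteq\fr(\vu)$, we obtain $\vy_1\subseteq\fr(\vu)$ (in fact the slightly stronger $\vy_1\subseteq\fr(\vu_1\times\{\tilde{u}_2\})$).

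The main step --- and the only non-obvious one --- is the decision to freeze $u_2=\tilde{u}_2$: one \emph{cannot} apply Theorem~\ref{theo:th} to the whole box $\vu$, because its boundary condition ($i$), $\vy_1\cap\fr(\partial\vu)=\emptyset$, genuinely fails as soon as $\vu_2$ is non-degenerate --- already for $\fr(u)=u_1$, every point of $\vu_1\times\partial\vu_2$ lies over $\vy_1$ while $H_\fr\subseteq\interior(\vu_1)$ still holds. Slicing removes precisely these spurious faces $\vu_1\times\partial\vu_2$, and, thanks to $0\in\vu_2-\tilde{u}_2$, it costs nothing in the contraction test. If one prefers to stay within Theorem~\ref{theo:th}, the same reduction applies to the equal–dimension map $g:\vu_1\subseteq\R^r\rightarrow\R^r$: condition ($ii$) is $g(\tilde{u}_1)\in\vy_1$, and condition ($i$), $\vy_1\cap g(\partial\vu_1)=\emptyset$, follows from a componentwise mean value theorem --- if $g(v)\in\vy_1$ for some $v\in\partial\vu_1$, the mean value theorem along the segment $[\tilde{u}_1,v]\times\{\tilde{u}_2\}\subseteq\vu$ produces a real matrix $M\in\vJ^{\fr,u_1}$ with $g(v)-g(\tilde{u}_1)=M(v-\tilde{u}_1)$; writing $M=\diag M+\offdiag M$ and inverting the nonsingular $\diag M$ places $v$ in $H(\vJ^{\fr,u_1},\tilde{u}_1,\vu_1,\vy_1)\subseteq\interior(\vu_1)$, contradicting $v\in\partial\vu_1$. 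Everything else --- the inclusion $H\subseteq H_\fr$ and the reduction itself --- is routine.
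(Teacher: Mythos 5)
Your proposal is correct, but it takes a genuinely different route from the paper's. The paper proves the corollary by verifying the two hypotheses of Theorem~\ref{theo:th} on the \emph{full} box $\vu$: (ii) is the assumption $\fr(\tilde{u})\in\vy_1$, and for (i) it supposes a boundary point $u=(u_1,u_2)\in\partial\vu$ with $\fr(u)\in\vy_1$, applies the componentwise mean value theorem, rearranges to obtain $u_1\in H_\fr(\vJ^\fr,\tilde{u},\vu,\vy_1)$, and concludes a contradiction with \eqref{eq:rang} on the grounds that $u_1\in\partial\vu_1$. You instead freeze $u_2=\tilde{u}_2$, apply Corollary~\ref{cor:gold} to the slice $g(\cdot)=\fr(\cdot\,,\tilde{u}_2)$ on $\vu_1$, and transfer \eqref{eq:rang} via $0\in\vJ^{\fr,u_2}(\vu_2-\tilde{u}_2)$ and inclusion isotonicity, getting the slightly stronger conclusion $\vy_1\subseteq\fr(\vu_1\times\{\tilde{u}_2\})$; your closing variant is essentially the paper's mean-value computation, but carried out on the slice, where $\partial\vu_1$ is the relevant boundary. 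The comparison favours your route on rigour: in the paper's argument the inference from $u\in\partial\vu$ to $u_1\in\partial\vu_1$ fails on the faces $\vu_1\times\partial\vu_2$, and your projection example $\fr(u)=u_1$ shows that hypothesis (i) of Theorem~\ref{theo:th} on the whole box can genuinely fail while \eqref{eq:rang} holds, so the paper's reduction cannot be completed as written; the slicing (or a boundary notion relative to $\vu_1$ only) is precisely what repairs it, at no cost in the contraction test. What the paper's approach would offer, if it went through, is a statement tied to Theorem~\ref{theo:th} on $\vu$ itself without privileging a value of $u_2$; what yours buys is a short, correct reduction to the already-established equal-dimension test of Corollary~\ref{cor:gold}.
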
	
\begin{proof}
	It is sufficient to prove that if \eqref{eq:rang} is satisfied, the conditions of Theorem~\ref{theo:th} are satisfied too.
	
	($i$) Let $u = (u_1, u_2) \in \partial \vu$.
	Since $u \in \vu$, the mean value theorem applied to $f_{1:r}$ (see~\cite{neumaier1990}) shows that
	\begin{equation}
		\fr(u) \in \fr(\tilde{u}) + \vJ^{\fr} (\vu - \tilde{u})
	\end{equation}
	Let us show that $\fr(\tilde{u}) + \vJ^{\fr} (\vu - \tilde{u}) \cap \vy_1 \neq \emptyset$ which implies $(i)$ false contradicts \eqref{eq:rang}.
	Assume that  there exists $J \in \vJ^{\fr}$, $J = (J_1 J_2)$ with $J_1 \in \R^{r \times r}$ and $J_2 \in \R^{r \times m - r}$; $u = (u_1, u_2)^T$, $\tilde{u} = (\tilde{u}_1, \tilde{u}_2)^T$, and $y_1 \in \vy_1$ such that
	\begin{eqnarray}
		\label{eq:mv}
		y_1	&=&	\fr(\tilde{u}) + J(u - \tilde{u})\nonumber\\
				&=&	\fr(\tilde{u}) + J_1(u_1 - \tilde{u}_1) + J_2(u_2 - \tilde{u}_2)
				\label{eq:temp}
	\end{eqnarray}
	By splitting $J_1$ in $\diag J_1 + \offdiag J_1$ in~\eqref{eq:temp}, we obtain:
	\begin{equation*}
		y_1 - \fr(\tilde{u}) -  J_2(u_2 - \tilde{u}_2) = (\diag J_1)(u_1 - \tilde{u}_1) + (\offdiag J_1)(u_1 - \tilde{u}_1)
		\end{equation*}
		\begin{equation*}
		\tilde{u}_1 + (\diag^{-1} J_1)\left(y_1 - \fr(\tilde{u}) - (\offdiag J_1)(u_1 - \tilde{u}_1) - J_2(u_2 - \tilde{u}_2)\right) = u_1
		\end{equation*}
	As $u \in \vu$, $y_1 \in \vy_1$, $\diag^{-1} J_1 \in \diag^{-1} \vJ_1$, $\offdiag J_1 \in \offdiag \vJ_1$, and $J_2 \in \vJ_2$, one gets
	\begin{equation}
		u_1 \in  H_\fr((\vJ_1\ \vJ_2), \tilde{u}, \vu, \vy_1) %=\tilde{u}_1 + (\diag^{-1}) \vJ_1\left(\vy_1 - \fr(\tilde{u}) - (\offdiag \vJ_1)(\vu_1 - \tilde{u}_1) - \vJ_2(\vu_2 - \tilde{u}_2)\right)
	\end{equation}
	and $u_1 \in \partial \vu_1$ which contradicts \eqref{eq:rang}.
	Then \eqref{eq:rang} implies $(i)$.

	$(ii)$ By hypothesis, $\fr(\tilde{u}) \in \vy_1$.
\end{proof}
	
	For a function $\fdef$, Corollary~\ref{cor:rang} gives a test for a box to belong to the image of $r$ components of $f$. 	
	It can only be performed if the Jacobian for $r$ components of the function evaluated over the considered box is of full rank $r$.		
	When the rank of $f$ equals the dimension of the co-domain, $f$ is a \emph{submersion}~\cite{arnold1985}, Corollary~\ref{cor:rang} can be used to compute an inner approximation of the entire range of $f$.

\begin{example}
	Let $f : \mathcal{D} \subset \R \rightarrow \R^3$ be a function that satisfies \eqref{eq:inv} for $r = 1$.
	\begin{eqnarray}
		f :	&&	\vx \subset \R \rightarrow \R^3\nonumber\\
			&&	x \mapsto
			\left(
				\begin{array}{c}
					\sin 2x\\
					\sin x\\
					\frac{x}{2}
				\end{array}
			\right)
			\label{eq:ex1}
	\end{eqnarray}	
	The box $\vx = \inter{0, \pi} \subset \mathcal{D}$ is considered as the domain on which $f$ is studied.
	The function is of constant rank 1 then using Corollary~\ref{cor:rang}, one is able to compute an inner approximation of the range of a single component of $f$ e.g. $f_1(\vx)$, $f_2(\vx)$ or $f_3(\vx)$.
	\begin{figure} 
		\centering 
		\def\svgwidth{.75\columnwidth} 
		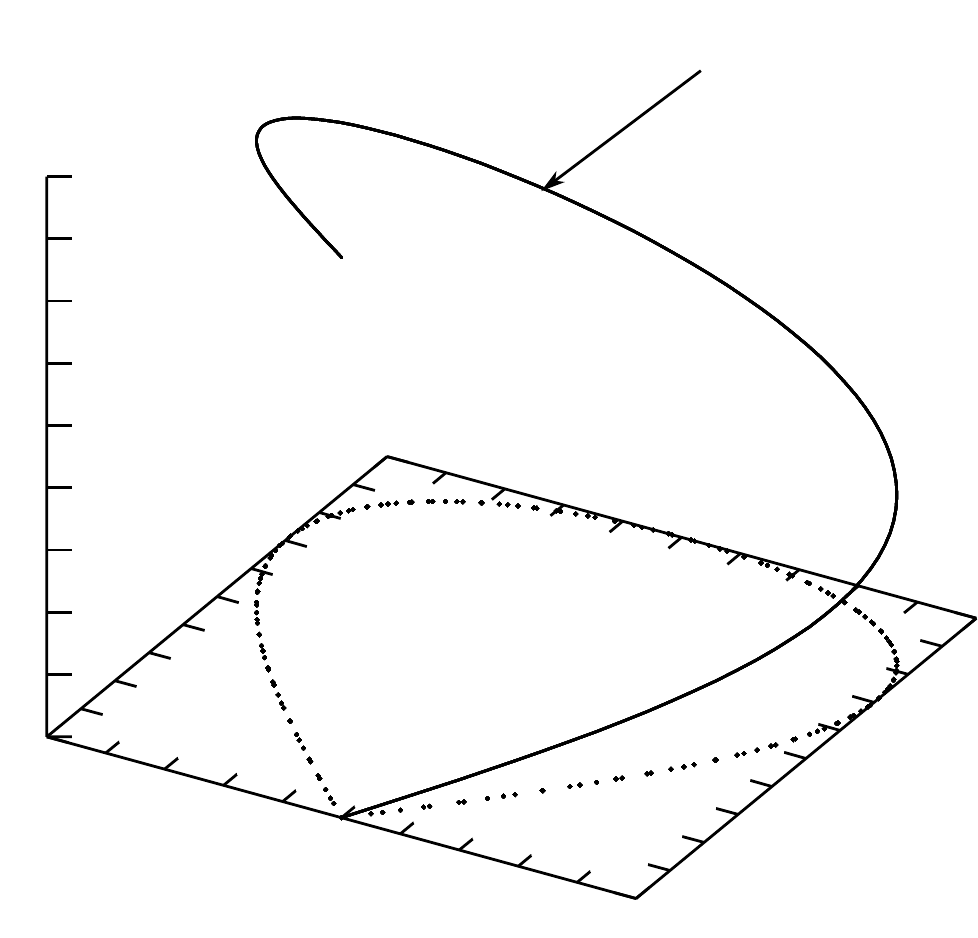
		\caption{Example for $f: (\vx) \subseteq \R \mapsto \R^3$ of constant rank 1.}
		\label{fig:gen}
	\end{figure}	
	There is of course no proper box of dimension 2 or 3 included in the range of $f$.
	Figure~\ref{fig:gen} represents the range of the function defined in \eqref{eq:ex1}.
\end{example}	
%************************************************************************************************************************************************************************
\section{Algorithms}	
%************************************************************************************************************************************************************************
\label{sec:algo}
%************************************************************************************************************************************************************************
\subsection{When domain and co-domain have the same dimension}
%************************************************************************************************************************************************************************
	Algorithm~1 in \cite{Goldsztejn2010} computes an inner approximation for functions with domain and co-domain of the same dimension using Corollary~\ref{cor:gold} and a bisection algorithm.
	The method is as follows.
	For a given box $\tilde{\vx}$ included in the initial domain $\vx$, a box $\tilde{\vy}$ such that $f(\vx) = \{ f(x) : x \in \tilde{\vx} \} \subseteq \tilde{\vy}$ is computed using the interval extension $\vf$ of $f$.

	If the hypotheses of Corollary~\ref{cor:gold} are satisfied, $\tilde{\vy}$ is part of an inner approximation of the range of $f$.
	If they are not satisfied, $\tilde{\vx}$ is partitioned into two smaller boxes $\tilde{\vx}'$ and $\tilde{\vx}''$ that are treated like $\tilde{\vx}$ was.
	If the box $\tilde{\vx} = (\tilde{\vx}_1, \dots, \tilde{\vx}_m)$ is deemed too small to be further bisected (i.e. when $\wid{\vx} < \varepsilon$ where $\varepsilon$ is a user-defined parameter), then the iterations stop for this box.
	This is described in Algorithm~\ref{algo:bis}.
	It uses the function Inner described in Algorithm~\ref{algo:inner}, to decide if a box belongs to the range of a function.
	\newcommand{\linside}{\mathcal{L}_{\texttt{Inside}}}
\newcommand{\lboundary}{\mathcal{L}_{\texttt{Boundary}}}
\newcommand{\ldomain}{\mathcal{L}_{\texttt{Domain}}}

\begin{algorithm2e}
\label{algo:bis}
	\caption{Range inner-approximation evaluation algorithm as defined in \cite{Goldsztejn2010}.}
	\KwIn{$f$, $\vx$, $\epsilon$} 
	\KwOut{$\linside$ (list of boxes), $\lboundary$ (list of boxes)}
	$\linside$: empty list of boxes\;
	$\ldomain$: empty list of boxes (sorted by decreasing width)\;	
	$\ldomain \leftarrow \ldomain \cup \{ \vx \}$\; 
	\While{$\ldomain$ not empty}{
		$\tilde{\vx} \leftarrow$ Extract($\ldomain$)\;
		$\tilde{\vy} \leftarrow \vf(\vx) \cap (\vf(\midpoint{\tilde{\vx}}) + \vf'(\tilde{\vx})(\tilde{\vx} - \midpoint{\tilde{\vx}}))$;\tcp{Inclusion function}
		\uIf{\emph{Inner}$(f$, $\vx$, $\tilde{\vx}$, $\tilde{\vy})$}
		{\tcp{Inner is from Algorithm~\ref{algo:inner} or Algorithm~\ref{algo:inner2} according to the dimension of domain and co-domain}
			$\linside \leftarrow \linside \cup \{ \tilde{\vy} \}$\; 
		}
		\uElseIf{\emph{wid}$(\tilde{\vx}) \ge \epsilon$}
		{
			Bisect the box $\tilde{\vx}$ to obtain $\tilde{\vx}'$ and $\tilde{\vx}''$\;
			$\ldomain \leftarrow \ldomain \cup \{ \tilde{\vx}', \tilde{\vx}'' \}$\;
		}
		\Else
		{
			$\lboundary \leftarrow \lboundary \cup \{ \tilde{\vy} \}$\;
		} 				
	}
	\Return{$(\linside, \lboundary)$}\;	
\end{algorithm2e}	
	
	Algorithm~\ref{algo:inner}	decides for a given box $\tilde{\vx} \subset \vx$ whether $\vf(\tilde{\vx})$ belongs to the range of $f$ over $\vx$.
	The parameters $\tau$ and $\mu$ are used for the domain inflation (see Section~5.2 in \cite{Goldsztejn2010}) and $C$ is used to precondition the interval matrix $\vJ^f$ (see Section~4 in \cite{Goldsztejn2010}).
	\begin{algorithm2e}
	\KwIn{$f$, $\vx$, $\tilde{\vx}$, $\vy$} 
	\KwOut{Boolean}
	$\tau \leftarrow 1.01$; $\mu \leftarrow 0.9$\;
	$k \leftarrow 0$\;
	$\tilde{x} \leftarrow \midpoint{\tilde{\vx}}$\;
	$C \leftarrow f'(\tilde{x})^{-1}$\;
	$\vb \leftarrow C\tilde{y} - Cf(\tilde{x})$\;
	$d_k \leftarrow + \infty$; $d_{k-1} \leftarrow + \infty$\;
	\While{$d_k \le \mu d_{k-1} \wedge \tilde{\vx} \subseteq \vx$}
	{
		$\vu \leftarrow \Gamma(C\vf'(\tilde{\vx}),\tilde{\vx}, - \tilde{x}, \vb)$;\tcp{$\Gamma$ is as defined in Corollary~\ref{cor:gold}}
		\lIf{$\tilde{x} + \vu \subseteq \tilde{\vx}$}
		{
			\Return{true}\;
		}
		$d_{k-1} \leftarrow d_k$\;
		$d_k \leftarrow d(\tilde{\vx}, \tilde{x} + \tau \vu)$;\tcp{Domain inflation (Cf. Section~5.2 in \cite{Goldsztejn2010})}
		$\tilde{\vx} \leftarrow \tilde{x} + \tau \vu$\;
	}
	\Return{false}\;
	\caption{Inclusion test as defined in \cite{Goldsztejn2010}.}
	\label{algo:inner}
\end{algorithm2e}

\begin{example}
	Let $f(x) = Ax$ with $A =
	\left(
		\begin{array}{rc}
			1 & 1\\
			-1 & 1
		\end{array}
	\right)\text{,}
	$
	and an initial domain $\vx = \left( \inter{-2, 2}, \inter{-2, 2} \right)$.
	The aim is to compute an inner approximation of the set $\{ f(x) : x \in \vx \}$.
	Of course, in this too simple case, direct methods would be applicable since $A$ is an invertible matrix, but this is intended to exemplify the method.
	\begin{figure}
		\centering
		\includegraphics[scale=0.45]{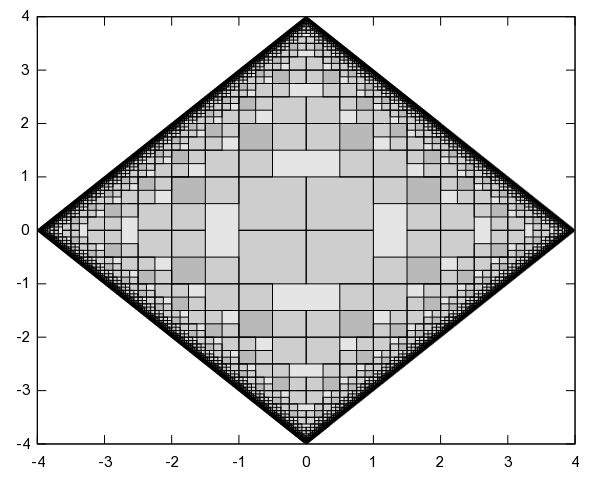}
		\caption{Inner approximation of the range of $f(x) = Ax$ when $x \in \inter{-2, 2}^2$ : $\lboundary$ (in black) and $\linside$ are evaluated using Algorithm~\ref{algo:bis} with $\varepsilon = 10^{-3}$}	
		\label{fig:ax}
	\end{figure}
	Figure~\ref{fig:ax} shows the result obtained using Algorithm~\ref{algo:bis}.
	Since bisections occur in the domain, the result consists of a set of overlapping boxes, obtained by an inclusion function computing outer approximations.
	Dark areas in Figure~\ref{fig:ax} indicate many overlapping boxes.
\end{example}
%************************************************************************************************************************************************************************
\subsection{When the domain and co-domain have different dimensions}
%************************************************************************************************************************************************************************	
	We now extend the method in \cite{Goldsztejn2010} to compute an inner approximation of the projection on $r$ components of $f$.
	Algorithm~\ref{algo:bis} is used unchanged, except for the inner inclusion test Inner in Line~7 which is now implemented by Algorithm~\ref{algo:inner2} instead of Algorithm~\ref{algo:inner}.
	\begin{algorithm2e}
	\KwIn{$\fdef$, $\vx$, $\vx'$, $\vy, r$\tcp{$r$ is the rank of $f$}} 
	\KwOut{Boolean}
	$\tau \leftarrow 1.01$; $\mu \leftarrow 0.9$\;
	$k \leftarrow 0$\;
	$\tilde{x} \leftarrow \midpoint{\tilde{\vx}}$\;
	$\vJ \leftarrow f'(\vx)$\;
	$J^{\fr} \leftarrow$ transformation$(f'(\tilde{x}), r)$; \tcp{recovery of the $r$ first lines of the Jacobian of $f$}
	$C \leftarrow \text{ precondition}(J^\fr)$;\tcp{Computation of the precondition matrix}
	$(\tilde{u_1}, \tilde{u_2}) \leftarrow$ Extract$(\tilde{x}, r, m)$\;
	$(\tilde{\vu_1}, \tilde{\vu_2}) \leftarrow$ Extract$(\tilde{\vx}, r, m)$\;
	$(\vu_1, \vu_2) \leftarrow$ Extract$(\vx, r, m)$\;
	$(\vJ^{\fr,u_1}, \vJ^{\fr,u_2}) \leftarrow$ Extract$(C\vJ, r, m)$\;
	$\vb \leftarrow \left(C\tilde{y} - Cf(\tilde{x})\right)_{1:r}$;\tcp{We need only the $r$ first components}
	$d_k \leftarrow + \infty$; $d_{k-1} \leftarrow + \infty$\;
	\While{$d_k \le \mu d_{k-1} \wedge \tilde{\vu} \subseteq \vu$}
	{
		$\vt \leftarrow (\text{Diag }^{-1}\vJ^{\fr,u_1})(\vy_1 - \fr(\tilde{u}_1, \tilde{u}_2)-(\text{OffDiag }\vJ_{\fr,u_1})(\tilde{\vu}_1-\tilde{u}_1)-\vJ^{\fr,u_2}(\tilde{\vu}_2-\tilde{u}_2))$\;
		\lIf{$\tilde{u} + \vt \subseteq \tilde{\vu}$}
		{
			\Return{true}\;
		}
		$d_{k-1} \leftarrow d_k$\;
		$d_k \leftarrow d(\tilde{\vu}, \tilde{u} + \tau \vt)$;\tcp{Domain inflation (Cf. Section~5.2 in \cite{Goldsztejn2010})}
		$\tilde{\vu} \leftarrow \tilde{u} + \tau \vt$\;
	}
	\Return{false}\;
	\caption{Inclusion test based on Corollary~\ref{cor:rang}}
	\label{algo:inner2}
\end{algorithm2e}
	When using Algorithm~\ref{algo:inner2}, the vector-valued function $f$ is assumed to satisfy \eqref{eq:inv}.
	The main difference with the method in \cite{Goldsztejn2010} is in the construction of the variables needed in the application of Corollary~\ref{cor:rang}: in Algorithm~\ref{algo:inner2}, Lines 7--10 are dedicated to the definition of the vectors $(u_1, u_2)$, the interval vectors $(\vu_1, \vu_2)$ and the interval matrices $(\vJ^{\fr, u_1}$, $\vJ^{\fr, u_2})$ from Corollary~\ref{cor:rang}.
	First, the $r$ components must be separated from the others to obtain $(u_1, u_2)$.
	In Line~7, we construct from a vector in $\mathcal{D} \subseteq\R^m$, the initial domain, a vector in $\R^r\times \R^{m - r}$. 
	\begin{equation*}
		(x_1, \dots, x_m)\mapsto ( (x_1, \dots, x_r), (x_{r+1}, \dots, x_{m}) )
	\end{equation*}
	Lines~8 and 9 construct the same information as at Line~7 but for $\vx \in \IR^m$, an interval vector instead of a vector in $\R^m$, to get $(\vu_1, \vu_2) \in \IR^r \times \IR^{m - r}$.
	In Line~10, the pair of interval matrices $(\vJ^{\fr, u_1}$, $\vJ^{\fr, u_2})) \in \R^{r \times r} \times \R^{r\times m - r}$ are obtained from an interval matrix $\vJ^\fr \in \R^{r \times n}$.

	\paragraph{Preconditioning}
	In~\cite{Goldsztejn2010}, the function has the same dimension for domain and co-domain and the Jacobian is then a square matrix.
	This interval square matrix which is an outer approximation of the Jacobian has to be preconditioned in order to apply the test in Corollary~\ref{cor:gold} with an H-matrix (see Definition~\ref{def:hmat} and Section~4 in \cite{Goldsztejn2010}).
	Here we need also to extend this preconditioning operation.
	In practice, the preconditioning matrix is computed as follows: 
	For a given box $\vu \in \IR^m$, $J^\fr(\tilde{u})$, the Jacobian of the $r$ first components of $f$, is computed for $\tilde{u} = \midpoint{\vu}$ and supplemented with the $(m-r)$ last lines of the identity matrix $I_{m}$ to obtain an $m \times m $ matrix 
		\begin{equation*}
			D_{\tilde{u}} = 
			\left(
				\begin{array}{c|c}
					\multicolumn{2}{c}{J^\fr(\tilde{u})}\\
					0_{m-r \times r}	&	I_{m-r}
				\end{array}
			\right).
		\end{equation*}
	The inverse of $D_{\tilde{u}}$ is computed and its $r$ first columns are extracted to be the preconditioning matrix $C$.
	Decomposing $C$ into $(C_1, C_2)^T$ with $C_1 \in \R^{r \times r}$ and $C_2 \in \R^{m-r \times r}$, the test in Corollary~\ref{cor:rang} becomes
	\begin{equation}
		H_\fr(C\vJ^\fr, \tilde{u}, \vu, C\vy_1) \subseteq \interior(\vu_1).
		\label{eq:rang2}
	\end{equation}
%************************************************************************************************************************************************************************	
\subsection{Extracting the sub-matrix of maximum rank from an interval matrix}
%************************************************************************************************************************************************************************	
	\label{sub:perm}
	The use of Algorithm~\ref{algo:inner2} requires that the rank $r$ of the Jacobian of $f$ is known and that $f$ satisfies \eqref{eq:inv}.
	The Jacobian matrix is an interval matrix containing the Jacobian of $f$ over some box.
	In the general case, we thus need to extract an interval sub-matrix of constant rank from the Jacobian of $f$.
	
	In this section, we first define the rank of an interval matrix.
	Then, we propose different methods to extract sub-matrices of full rank from a given interval matrix.
	Some results on the evaluation of the eigenvalues of an interval matrix are well documented (see, e.g., \cite{rohn1993interval}) but are not tractable for our problem.
	The extraction of an $r\times r$ sub-matrix of full rank is also not tractable.
	Thus, we chose to rely on three more tractable - though more approximate - methods aiming at extracting a sub-matrix of high rank from a given interval matrix.
	\begin{definition}[Regular interval square matrix~\cite{rohn1989systems}]\ \\
		Let $\vA \in \IR^{n \times n}$ be an interval matrix.
		$\vA$ is regular if and only if for all matrix $A \in \vA$, $A$ is not singular.
	\end{definition}
	\begin{definition}[Rank of an interval matrix~\cite{ishida1996reasoning}]\ \\
		Let $\vA \in \IR^{n \times m}$ be an interval matrix.
		$\vA$ is of constant rank $r$ if and only if the largest regular interval square sub-matrix $\vA_0$ of $\vA$, is of dimension $r$.
		\label{def:rankinter}
	\end{definition}
	
	Definition~\ref{def:rankinter} means that for all $A \in \vA \in \IR^{n\times m}$, the rank of $A$ is larger than or equal to $r$.
	To extract a regular interval square matrix of dimension equal to the rank of $\vA$, three techniques are proposed in what follows.
	
%************************************************************************************************************************************************************************
\subsubsection{Building strictly dominant interval sub-matrices}
%************************************************************************************************************************************************************************
	\label{sec:sdd}

	This first method relies on the Levy-Desplanques theorem on strictly dominant matrices as a simple test for non-singularity.
	We uses this test to formulate the extraction of sub-matrices of full rank as a linear programming problem.
	\begin{definition}[Strictly diagonally dominant matrix~\cite{golub1996}]\ \\
		Let $A = \left(a_{ij}\right)_{1\leqslant i,j \leqslant m}\in\R^{m\times m}$ be a square matrix.
		$A$ is a strictly diagonally dominant matrix if and only if
		\begin{equation}
			\forall i\in \{1,\dots,m\}, |a_{ii}|> \sum_{\substack{i=1\\i\neq j}}^m |a_{ij}|.
		\end{equation}
	\end{definition}
	This definition can be extended to interval matrices, using magnitude and mignitude instead of the absolute value:
	\begin{definition}[Strictly diagonally dominant interval matrix~\cite{neumaier1990}]\ \\
		Let $\vA = \left(\va_{ij}\right)_{1\leqslant i,j \leqslant m}\in \IR^{m\times m}$ be a square interval matrix.
		$\vA$ is a strictly diagonally dominant interval matrix if and only if
		\begin{equation}
			\forall i\in \{1,\dots,m\}, \langle\va_{ii}\rangle> \sum_{\substack{i=1\\i\neq j}}^m |\va_{ij}|
			\label{eq:diag}
		\end{equation}
		\label{def:sdd}
	\end{definition}
	\begin{theorem}[Levy-Desplanques theorem~\cite{levy1881,taussky1949recurring}]
		\label{theo:levy}
		A strictly diagonally dominant (interval) matrix is regular.
	\end{theorem}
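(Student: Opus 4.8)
The plan is to prove the classical (point) Levy--Desplanques statement first and then lift it to the interval case by a routine "worst-case matrix" argument using magnitude and mignitude. For the point case, I would argue by contradiction: suppose $A = (a_{ij})$ is strictly diagonally dominant but singular, so there is a nonzero vector $x = (x_1,\dots,x_m)^T$ with $Ax = 0$. Pick an index $k$ with $|x_k| = \max_i |x_i| > 0$. The $k$-th equation of $Ax = 0$ gives $a_{kk} x_k = -\sum_{j \neq k} a_{kj} x_j$, hence
\begin{equation*}
|a_{kk}|\,|x_k| \;\leqslant\; \sum_{j \neq k} |a_{kj}|\,|x_j| \;\leqslant\; |x_k| \sum_{j \neq k} |a_{kj}|,
\end{equation*}
and dividing by $|x_k| > 0$ yields $|a_{kk}| \leqslant \sum_{j\neq k} |a_{kj}|$, contradicting strict diagonal dominance. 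So $A$ is nonsingular.

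For the interval case, I would show that if $\vA \in \IR^{m\times m}$ satisfies \eqref{eq:diag}, then every $A = (a_{ij}) \in \vA$ is strictly diagonally dominant in the point sense, and therefore regular by Definition~\ref{def:sdd} together with the point case just established. The key observation is the pair of elementary facts about magnitude and mignitude: for any $a_{ij} \in \va_{ij}$ one has $|a_{ij}| \leqslant |\va_{ij}|$, and for the diagonal entries, since $0 \notin \va_{ii}$ (which follows from $\langle \va_{ii}\rangle > 0$, itself a consequence of \eqref{eq:diag} whenever the right-hand sum makes the inequality nontrivial — and trivially $\langle\va_{ii}\rangle>0$ is forced), one has $|a_{ii}| \geqslant \langle \va_{ii}\rangle$. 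Chaining these with \eqref{eq:diag} gives, for each $i$,
\begin{equation*}
|a_{ii}| \;\geqslant\; \langle \va_{ii} \rangle \;>\; \sum_{j \neq i} |\va_{ij}| \;\geqslant\; \sum_{j \neq i} |a_{ij}|,
\end{equation*}
so $A$ is strictly diagonally dominant, hence nonsingular, and since $A \in \vA$ was arbitrary, $\vA$ is regular.

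**Main obstacle.** There is essentially no deep obstacle here; the statement is standard. The only point requiring a little care is the bookkeeping around mignitude: one must note that \eqref{eq:diag} with a nonnegative right-hand side forces $\langle\va_{ii}\rangle > 0$, i.e. $0 \notin \va_{ii}$, which is exactly the hypothesis making $\langle\va_{ii}\rangle = \min\{|\lb{a_{ii}}|,|\ub{a_{ii}}|\}$ a genuine lower bound on $|a_{ii}|$ for every $a_{ii} \in \va_{ii}$ (rather than the default value $0$). Once that is observed, the reduction to the point theorem is immediate, and the point theorem is the one-line maximum-component argument above. I would present the point case as a short standalone paragraph (or cite it as classical) and spend the bulk of the proof on the magnitude/mignitude reduction, since that is the part specific to the interval setting used in this paper.
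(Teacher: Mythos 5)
Your proof is correct: the maximum-component contradiction settles the point case, and your mignitude/magnitude reduction (observing that \eqref{eq:diag} forces $\langle\va_{ii}\rangle > 0$, hence $0 \notin \va_{ii}$, so that $|a_{ii}| \geqslant \langle\va_{ii}\rangle$ and $|a_{ij}| \leqslant |\va_{ij}|$ for every $A \in \vA$, making each such $A$ strictly diagonally dominant and thus nonsingular) correctly lifts it to the interval setting and to the paper's notion of regularity. The paper itself gives no proof of this theorem — it simply cites the classical references — and your argument is exactly the standard one underlying those citations, so it matches the intended justification.
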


%	To extract a sub-matrix of full rank from a given interval matrix $\vA$, the idea is here to find a strictly dominant interval sub-matrix of $\vA$ of maximum dimension.
%	The obtained sub-matrix has a rank less than or equal to the rank of $\vA$ since Theorem~\ref{theo:levy} provides only a sufficient condition for a matrix to be non singular.		
%	This problem may be defined as a constrained optimization problem. 
	Consider $\vA \in \IR^{n\times m}$, an interval matrix.
	We introduce the decision variables $x_{ij}$ with $1 \leqslant i \leqslant n$ and $1 \leqslant j \leqslant m$.
	The boolean $x_{ij}$ equals 1 if the component $\va_{ij}$ of $\vA$ is picked to be an element of the diagonal of the rank $k$ sub-matrix of $\vA$ and 0 otherwise.
	The $x_{ij}$s are obtained as solutions of the following constrained optimization problem
	\begin{equation}
		\begin{aligned}
			\max\	&	f(x) = 	 \sum_{i=1}^n\sum_{j=1}^m x_{ij}	&\\
	  		s.t.		& 
	  		\left\{
	  			\begin{array}{lr}
	  					\displaystyle{\sum_{i=1}^n x_{ij}} \leqslant 1		&	j = 1,\dots,m\\
	     				\displaystyle{\sum_{j=1}^m x_{ij}} \leqslant 1	&	i = 1,\dots,n\\
%						\displaystyle{\sum_{i=1; i\ne k}^n\sum_{j=1}^m} x_{ij}|\va_{il}| \leqslant M + (\langle\va_{kl}\rangle+ \mu - M)x_{kl}	&	\begin{array}{c}k = 1,\dots,n\\l = 1,\dots,m\end{array}\\
						\displaystyle{\sum_{k = 1}^n x_{kl} \ \langle \va_{kl} \rangle > \sum_{i = 1}^n \sum_{j = 1}^m x_{ij} (1 - x_{il}) |\va_{il}|}	&	\begin{array}{c}k = 1,\dots,n\\l = 1,\dots,m\end{array}\\								
						x_{ij}\in \{0,1\}	&
     			\end{array}
     		\right.
	 	\end{aligned}
      	\label{eq:lp}
    \end{equation}	
    The objective is to maximize the size of a square regular sub-matrix of $\vA$.	
   	The two first constraints ensure that at most one component on each row and column of the interval matrix $\vA$ is taken (it corresponds to the problem of placing towers in a possibly not square chess board).
   	The last constraint corresponds to Theorem~\ref{theo:levy}.
   	Figure~\ref{fig:solpl} shows an example of solution provided by the constrained optimization problem \eqref{eq:lp}. 
   	\begin{figure}
   	\centering
   		\begin{tabular}{|c|c|c|c|c|}
   			\hline
   				\;\;\; &	$\bigstar$	&	$\blacklozenge$	&	$\blacklozenge$	& \;\;\;	\\
   			\hline	
   				\ &	$\blacklozenge$	&	$\blacklozenge$	&	$\bigstar$	&	\\
   			\hline	
   				\ &	&	&	&	\\
   			\hline	
   				&	$\blacklozenge$	&	$\bigstar$	&	$\blacklozenge$	&	\\
   			\hline
   		\end{tabular}
   		\caption{Example of result provided by the method using strictly dominance (see Section~\ref{sec:sdd}) and the one using H-matrices (see Section~\ref{sec:hmat}) on an interval matrix $\vA \in \IR^{4 \times 5}$: $\bigstar$ represents components of the matrix that have been chosen ($x_{ij} = 1$) for the diagonal and $\blacklozenge$ represents the non-diagonal entries of the sub-matrix. Empty boxes represent components that are not part of the sub-matrix.}
   		\label{fig:solpl}
   	\end{figure} 
   	A component of the interval matrix is picked if and only if it satisfies \eqref{eq:diag} and then leads to a strictly diagonally dominant interval matrix.
   	
   	The last constraints in \eqref{eq:lp} are quadratic and have to be turned into linear constraints for efficiency reasons since linear programming techniques are generally fast.
   	A given $\va_{kl}$ is chosen if the sum of all the other $\va_{il}$ for $i = 1, \dots, m$ for which there exists an $\va_{ij}$ that is part of the diagonal of the extracted sub-matrix is lower.
   	Equivalently, 
	\begin{equation}
		x_{kl} = 1 \Rightarrow \langle \va_{kl}\rangle > \sum_{\substack{i=1\\i\ne k}}^n\sum_{j=1}^m x_{ij}|\va_{il}|.
	\end{equation}	
	Using the so called Big-M relaxation (see, e.g., \cite{hooker2011integrated}), this constraint can be rewritten as follows.
	\begin{equation}
		\sum_{\substack{i=1\\i\ne k}}^n\sum_{j=1}^m x_{ij}|\va_{il}| \leqslant M + (\langle\va_{kl}\rangle - \mu - M)x_{kl}
		\label{eq:constraint}
	\end{equation}
	with $M$ chosen to be larger than $\sum_{\substack{i=1\\i\ne k}}^n \sum_{j=1}^m x_{ij}|\va_{il}|$ in order to deactivate the constraint when $x_{kl} = 0$ and $\mu$ as small as possible to approximate the strict inequality but not too small to avoid introduction of numerical instability.
	Using~\eqref{eq:constraint} in~\eqref{eq:lp}, the constrained optimization problem~\eqref{eq:lp} becomes 
	\begin{equation}
		\begin{aligned}
			\max\	&	f(x) = 	 \sum_{i=1}^n\sum_{j=1}^m x_{ij}	&\\
	  		s.t.		& 
	  		\left\{
	  			\begin{array}{lr}
	  					\displaystyle{\sum_{i=1}^n x_{ij}} \leqslant 1		&	j = 1,\dots,m\\
	     				\displaystyle{\sum_{j=1}^m x_{ij}} \leqslant 1	&	i = 1,\dots,n\\
						\displaystyle{\sum_{\substack{i=1\\i\ne k}}^n\sum_{j=1}^m} x_{ij}|\va_{il}| \leqslant M + (\langle\va_{kl}\rangle+ \mu - M)x_{kl}	&	\begin{array}{c}k = 1,\dots,n\\l = 1,\dots,m\end{array}\\
%						\displaystyle{\sum_{k = 1}^m x_{kl} \ \langle \va_{kl} \rangle > \sum_{i = 1}^m \sum_{j = 1}^n x_{ij} (1 - x_{il}) |\va_{il}|}	&	\begin{array}{c}k = 1,\dots,n\\l = 1,\dots,m\end{array}\\								
						x_{ij}\in \{0,1\}	&
     			\end{array}
     		\right.
	 	\end{aligned}
      	\label{eq:lpsdd}
    \end{equation}	
	Using a linear programming solver on~\eqref{eq:lpsdd}, a strictly dominant interval matrix can be extracted from $\vA$.
	The property for an interval matrix $\vA$ to be a strictly dominant interval matrix is on the rows of $\vA$.
	This definition can apply also for $\vA^T$ the transpose of $\vA$, this is why the linear program is solved for both $\vA$ and $\vA^T$ to obtain the best result.

%************************************************************************************************************************************************************************
\subsubsection{Building H-sub-matrices}
%************************************************************************************************************************************************************************
\label{sec:hmat}
A second method is now investigated.
It uses a generalization of strictly dominant interval matrices, i.e., the notion of H-matrices~\cite{neumaier1990}.
Basic results on H-matrices are first provided before showing the slight changes in the constraint~\eqref{eq:constraint} that have to be done in order to detect H-sub-matrices in an interval matrix.

\begin{definition}[Comparison Matrix~\cite{neumaier1990}]\ \\
	Let $\vA \in \R^{m \times m}$ be a square interval matrix. 
	The comparison matrix $\langle \vA \rangle$ is built as follows
	\begin{equation*}
		\langle \vA \rangle_{ij} =
		\begin{cases}
			\langle \vA_{ij} \rangle \text{ if } i = j\\
			-|\vA_{ij}| \text{ otherwise}
		\end{cases}\text{ with } i, j = 1, \dots, m.
	\end{equation*}
\end{definition}

\begin{definition}[H-matrix~\cite{neumaier1990}]\ \\
	Let $\vA \in \R^{m \times m}$ be a square interval matrix.
	$\vA$ is an H-matrix if and only if there exists $u > 0_{m}$ such that $\langle \vA \rangle u > 0_m$.
	\label{def:hmat}
\end{definition}

\begin{theorem}[\cite{neumaier1990}]
	Every H-matrix is regular.
\end{theorem}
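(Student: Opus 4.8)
The plan is to reduce the claim to the Levy--Desplanques theorem (Theorem~\ref{theo:levy}) by a positive diagonal rescaling. Let $\vA \in \IR^{m\times m}$ be an H-matrix, so by Definition~\ref{def:hmat} there is a vector $u = (u_1,\dots,u_m)^T > 0_m$ with $\langle \vA \rangle u > 0_m$. Spelling out the $i$-th component of this inequality and using the definition of the comparison matrix, this reads
\begin{equation*}
	\langle \vA_{ii}\rangle\, u_i > \sum_{\substack{j=1\\ j\neq i}}^m |\vA_{ij}|\, u_j, \qquad i = 1,\dots,m.
\end{equation*}
First I would introduce the nonsingular real diagonal matrix $D = \diag(u_1,\dots,u_m)$ and consider the scaled interval matrix $\vB \triangleq \vA D \in \IR^{m\times m}$, whose entries are $\vB_{ij} = \vA_{ij}\, u_j$.

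The key elementary step is that multiplication of an interval by the positive scalar $u_j$ scales magnitude and mignitude by $u_j$: since $\vA_{ij} u_j = [\,u_j\lb{a}_{ij},\, u_j\ub{a}_{ij}\,]$ when $u_j > 0$, one gets $|\vB_{ij}| = |\vA_{ij} u_j| = u_j\,|\vA_{ij}|$ and likewise $\langle \vB_{ij}\rangle = u_j\,\langle \vA_{ij}\rangle$. Substituting into the displayed inequalities shows that for every $i$, $\langle \vB_{ii}\rangle > \sum_{j\neq i} |\vB_{ij}|$, i.e.\ $\vB$ is a strictly diagonally dominant interval matrix in the sense of Definition~\ref{def:sdd}. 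By the Levy--Desplanques theorem (Theorem~\ref{theo:levy}), $\vB$ is regular.

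It remains to transfer regularity of $\vB$ back to $\vA$. Suppose, for contradiction, that some $A \in \vA$ is singular. Then $AD$ is singular as well, because $D$ is invertible ($\det(AD) = \det A \det D = 0$). But $(AD)_{ij} = A_{ij} u_j \in \vA_{ij} u_j = \vB_{ij}$, so $AD \in \vB$, contradicting the regularity of $\vB$. Hence every $A \in \vA$ is nonsingular, i.e.\ $\vA$ is regular.

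I expect no serious obstacle here; the only point requiring care is the verification of the magnitude/mignitude scaling identities under multiplication by a positive scalar, and the observation that entrywise membership $A\in\vA$ is preserved under right multiplication by the real diagonal matrix $D$, so that $AD\in\vA D=\vB$. Both are routine consequences of the definitions of interval arithmetic recalled in the Notations section.
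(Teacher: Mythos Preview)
Your argument is correct: the positive diagonal rescaling $\vB=\vA D$ with $D=\diag(u_1,\dots,u_m)$ turns the H-matrix condition $\langle\vA\rangle u>0_m$ into strict diagonal dominance of $\vB$, and regularity then transfers back via $A\in\vA\Rightarrow AD\in\vB$. The scaling identities $|\vA_{ij}u_j|=u_j|\vA_{ij}|$ and $\langle\vA_{ii}u_i\rangle=u_i\langle\vA_{ii}\rangle$ for $u_j>0$ are indeed immediate from the definitions, and the membership $AD\in\vA D$ is entrywise.

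There is nothing to compare against in the paper itself: the theorem is quoted from~\cite{neumaier1990} without proof, so the paper relies on Neumaier's result as a black box. Your reduction to Theorem~\ref{theo:levy} is in fact the classical proof (and is essentially how Neumaier establishes it), so you have supplied precisely the argument the paper omits.
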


\begin{remark}
	The notion of H-matrices generalizes the one of strictly dominant interval matrices since a strictly diagonally dominant interval matrix is a particular case of an H-matrix by fixing $u = (\underbrace{1, \dots, 1}_{m})^T$ in Definition~\ref{def:hmat}.
	\label{rmk:h}
\end{remark}

From Remark~\ref{rmk:h}, only slight changes have to be done in order to detect an H-matrix instead of a strictly diagonally one. 
The constrained optimization Problem~\eqref{eq:lpsdd} is transformed into
\begin{equation}
		\begin{aligned}
			\max\	&	f(x) = 	 \sum_{i=1}^n\sum_{j=1}^m x_{ij}	&\\
	  		s.t.		& 
	  		\left\{
	  			\begin{array}{lr}
	  					\displaystyle{\sum_{i=1}^n x_{ij}} \leqslant 1		&	j = 1,\dots,m\\
	     				\displaystyle{\sum_{j=1}^m x_{ij}} \leqslant 1	&	i = 1,\dots,n\\
						\displaystyle{\sum_{\substack{i=1\\i\ne k}}^n\sum_{j=1}^m} x_{ij}|\va_{il}|u_{ij} \leqslant M + (\langle\va_{kl}\rangle u_{kl} - \mu - M)x_{kl}	&	\begin{array}{c}k = 1,\dots,n\\l = 1,\dots,m\end{array}\\								
						x_{ij}\in \{0,1\}	&\\
						u_{ij}> 0	&						
     			\end{array}
     		\right.
	 	\end{aligned}
      	\label{eq:lp2bis}
    \end{equation}	
    Figure~\ref{fig:solpl} shows an example of solution provided by the constrained optimization Problem \eqref{eq:lp2bis}.
	In \eqref{eq:lp2bis}, the last constraint requires a matrix of variables $U = (u_{ij})_{\substack{1\leq i \leq n\\1 \leq j \leq m}}$ to be introduced.
	It corresponds to the vector $u$ in Definition~\ref{def:hmat}.
	This means we now have to solve a quadratic problem that could be tackled using SDP solvers.
	In order to solve \eqref{eq:lp2bis} as efficiently as possible, i.e., by using linear programming techniques, we thus chose a particular $u$ before solving~\eqref{eq:lp2bis}.
	All components of $u$ are chosen to be the inverse of the mignitude of the diagonal entries of the considered sub-matrix (as recommended in~\cite{neumaier1990}): $u = (\langle \va_{ij} \rangle^{-1})_{1\leq i \leq n; 1 \leq j \leq m}$.
The linear program is then
\begin{equation}
		\begin{aligned}
			\max\	&	f(x) = 	 \sum_{i=1}^n\sum_{j=1}^m x_{ij}	&\\
	  		s.t.		& 
	  		\left\{
	  			\begin{array}{lr}
	  					\displaystyle{\sum_{i=1}^n x_{ij}} \leqslant 1		&	j = 1,\dots,m\\
	     				\displaystyle{\sum_{j=1}^m x_{ij}} \leqslant 1	&	i = 1,\dots,n\\
						\displaystyle{\sum_{\substack{i=1\\i\ne k}}^n\sum_{j=1}^m} x_{ij}|\va_{il}|\langle \va_{ij} \rangle^{-1} \leqslant M + (\underbrace{\langle\va_{kl}\rangle \langle \va_{kl} \rangle^{-1}}_{=1} - \mu - M)x_{kl}	&	\begin{array}{c}k = 1,\dots,n\\l = 1,\dots,m\end{array}\\								
						x_{ij}\in \{0,1\}	&
     			\end{array}
     		\right.
	 	\end{aligned}
      	\label{eq:lp2}
    \end{equation}	
	\begin{remark}
		Note that $\langle \va_{kl} \rangle$ has to be different from 0 because of the division that occurs in the constraint.
	\end{remark}
	As in the method using strictly diagonally dominant matrices, the linear program~\eqref{eq:lp2} can be solved using a linear programming solver.
%************************************************************************************************************************************************************************
\subsubsection{Combinatorial method}
%************************************************************************************************************************************************************************
A random search for an interval sub-matrix of maximum rank is performed.
It could rely on the two previous conditions (Definition~\ref{def:sdd} or Definition~\ref{def:hmat}) to determine whether a matrix is of full rank.
However, since no linear programming formulation has to be consider, one may use a more sophisticated test for full rank verification.
We use for that a result provided in~\cite{rohn1989systems}.
\begin{theorem}[Corollary 5.1 in \cite{rohn1989systems}]
	Let $\vA \in \IR^{m \times m}$ be an square interval matrix. 
	Let $\Delta$ be a matrix such that $\vA = \midpoint{\vA} + \inter{-\Delta, \Delta}$.
	 Let $D = |\text{\emph{mid}}(\vA)^{-1}|\Delta$.
	 If the spectral radius $\rho(D) < 1$ then $\vA$ is regular.
	 \label{thm:regular}
\end{theorem}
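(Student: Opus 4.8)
The plan is to prove the apparently stronger statement that \emph{every} $A \in \vA$ is nonsingular, by writing each such $A$ as a product of two invertible matrices. Set $A_c = \midpoint{\vA}$; this matrix is invertible by hypothesis, since its inverse already appears in the definition of $D$. I would then use the factorization
\[
	A \;=\; A_c - (A_c - A) \;=\; A_c\bigl(I_m - A_c^{-1}(A_c - A)\bigr),
\]
so that $A$ is nonsingular as soon as $I_m - A_c^{-1}(A_c - A)$ is. For the latter it suffices to show $\rho\bigl(A_c^{-1}(A_c - A)\bigr) < 1$: then the Neumann series $\sum_{k \geqslant 0}\bigl(A_c^{-1}(A_c - A)\bigr)^k$ converges and is the inverse of $I_m - A_c^{-1}(A_c - A)$.

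The remaining task is the spectral-radius bound, which I would obtain by passing to entrywise absolute values. Since $A \in \vA = A_c + \inter{-\Delta,\Delta}$, the entrywise inequality $|A_c - A| \leqslant \Delta$ holds, and submultiplicativity of $|\cdot|$ under matrix products gives
\[
	\bigl|A_c^{-1}(A_c - A)\bigr| \;\leqslant\; |A_c^{-1}|\,|A_c - A| \;\leqslant\; |A_c^{-1}|\,\Delta \;=\; D.
\]
Then I would invoke monotonicity of the spectral radius with respect to entrywise domination by a nonnegative matrix: if $|B| \leqslant C$ with $C \geqslant 0$ then $\rho(B) \leqslant \rho(|B|) \leqslant \rho(C)$, a standard consequence of Perron--Frobenius theory (see, e.g., \cite{neumaier1990}). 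With $B = A_c^{-1}(A_c - A)$ and $C = D$ this yields $\rho\bigl(A_c^{-1}(A_c - A)\bigr) \leqslant \rho(D) < 1$, as required. Since $A$ was arbitrary in $\vA$, the matrix $\vA$ is regular.

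The single nontrivial ingredient, and the step I would be most careful about stating and citing precisely, is this monotonicity/domination property of the spectral radius; everything else (the Neumann-series criterion, submultiplicativity of entrywise magnitude, the algebraic factorization) is routine. As an alternative that sidesteps the factorization, one can argue by contradiction: if some $A \in \vA$ were singular, pick $x \neq 0$ with $Ax = 0$, rewrite as $A_c x = (A_c - A)x$, and deduce $|x| \leqslant |A_c^{-1}|\,|A_c - A|\,|x| \leqslant D\,|x|$; multiplying repeatedly by the nonnegative matrix $D$ gives $0 \leqslant |x| \leqslant D^k |x|$ for all $k$, while $\rho(D) < 1$ forces $D^k \to 0$, hence $|x| = 0$, a contradiction. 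This route again rests only on the elementary behaviour of nonnegative matrices.
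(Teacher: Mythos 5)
Your proof is correct. Note that the paper offers no proof of this statement at all --- it is imported verbatim as Corollary~5.1 of \cite{rohn1989systems} --- so the only meaningful comparison is with the classical argument in the literature, and yours is essentially that argument: both your Neumann-series factorization of $A = \midpoint{\vA}\bigl(I_m - \midpoint{\vA}^{-1}(\midpoint{\vA}-A)\bigr)$ and your contradiction variant with $Ax=0$ are the standard routes to this sufficient regularity condition, resting on the same two facts, namely the entrywise bound $\bigl|\midpoint{\vA}^{-1}(\midpoint{\vA}-A)\bigr| \leqslant D$ and the monotonicity of the spectral radius under entrywise domination by a nonnegative matrix (see also \cite{neumaier1990}). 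You also correctly identified and handled the implicit hypothesis that $\midpoint{\vA}$ is invertible, which the statement presupposes through the definition of $D$.
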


	We combine this criterion derived from Theorem~\ref{thm:regular} by extracting randomly chosen components of an interval matrix and testing whether the resulting sub-matrix is regular.
	This process is described in Algorithm~\ref{algoCombi}.
	\begin{algorithm2e}
	\label{algoCombi}
	\caption{Extraction of regular interval sub-matrix}
	\KwIn{$\vA \in \R^{n,m}$}
	\KwOut{$\vB \in \IR^{k,k}$ a regular interval matrix}
	\If{$n = m$ and $\vA$ is regular}
	{
		\Return{\vA}
	}
	\For{$k = \min(n, m)$ \emph{\textbf{downto}} 1}
	{
		\For{$i = 1$ \emph{\textbf{to}} MAX\_ITERATION}
		{
			$\vB \leftarrow$ extraction$(\vA, k)$\tcp{Extraction takes randomly $k$ components of $\vA$ to be in the diagonal of $\vB$ and the other components are deducted from this diagonal.}
			\If{$\vB$ is regular}
			{
				\Return{\vB}
			}
		}
	}
\end{algorithm2e}
%************************************************************************************************************************************************************************	
\subsubsection{Experiments on the sub-matrix extraction}
%************************************************************************************************************************************************************************	
	In this section, some results on extracting an interval square sub-matrix of maximum rank from a given interval matrix are now described for the three methods that have been previously described.	
	Two types of experiments have been performed depending on how the considered interval matrix has been produced.	
	The linear programs for the first two methods have been solved using the GLPK interface for C++~\cite{makhorin2006glpk}.
	All experiments have been done on a 2.3 Ghz Intel core i5 processor based laptop with 8 GBytes memory.
	In all experiments, the constant for the linear programs~\eqref{eq:lpsdd} and~\eqref{eq:lp2} are $M = \sum_{i=1}^m \sum_{j=1}^n \ub{\va}_{ij}$ and $\mu = 10^{-2}$.
	For the random extraction, Algorithm~\ref{algoCombi} has been used with a fixed MAX\_ITERATION equal to 500.
	Results have been averaged over 200 realizations.
	
	First experiments have been done on an interval matrix generated randomly but containing  a strictly dominant interval sub-matrix with a fixed dimension.
	The matrix is constructed as follows: the size $m$ of an interval square matrix $\vA$ is chosen.
	For each component $\va_{ij}$ of $\vA$, $\va_{ij} = \inter{\lb{\va}_{ij}, \ub{\va}_{ij}}$ with $1 \leqslant i, j \leqslant m$, $\lb{\va}_{ij}$ is a (pseudo) random number in $\inter{0,9}$ and $\ub{\va}_{ij}$ is equal to $\lb{\va}_{ij} + 1$ for all $i, j = 1, \dots, m$.
	An a priori rank $r$ is chosen.
	Then $r$ coordinates $(i, j)$ are randomly picked and for each of these pairs, the associated interval $\va_{ij} = \inter{\lb{\va}_{ij}, \ub{\va}_{ij}}$ is taken as
	\begin{equation*}
			 \lb{\va}_{ij} = 1 + \sum_{k = 1}^m \ub{\va}_{kj}\text{ and }\ub{\va}_{ij} = \lb{\va}_{ij} + 1
	\end{equation*}
	
	Using this construction, there is in the resulting interval matrix $\vA$ an $r \times r$ interval sub-matrix of $\vA$ and $r$ is a lower bound of the actual rank of $\vA$.
	
	\begin{figure}
	\begin{scriptsize} 
		\centering 
		\def\svgwidth{.8\columnwidth} 
		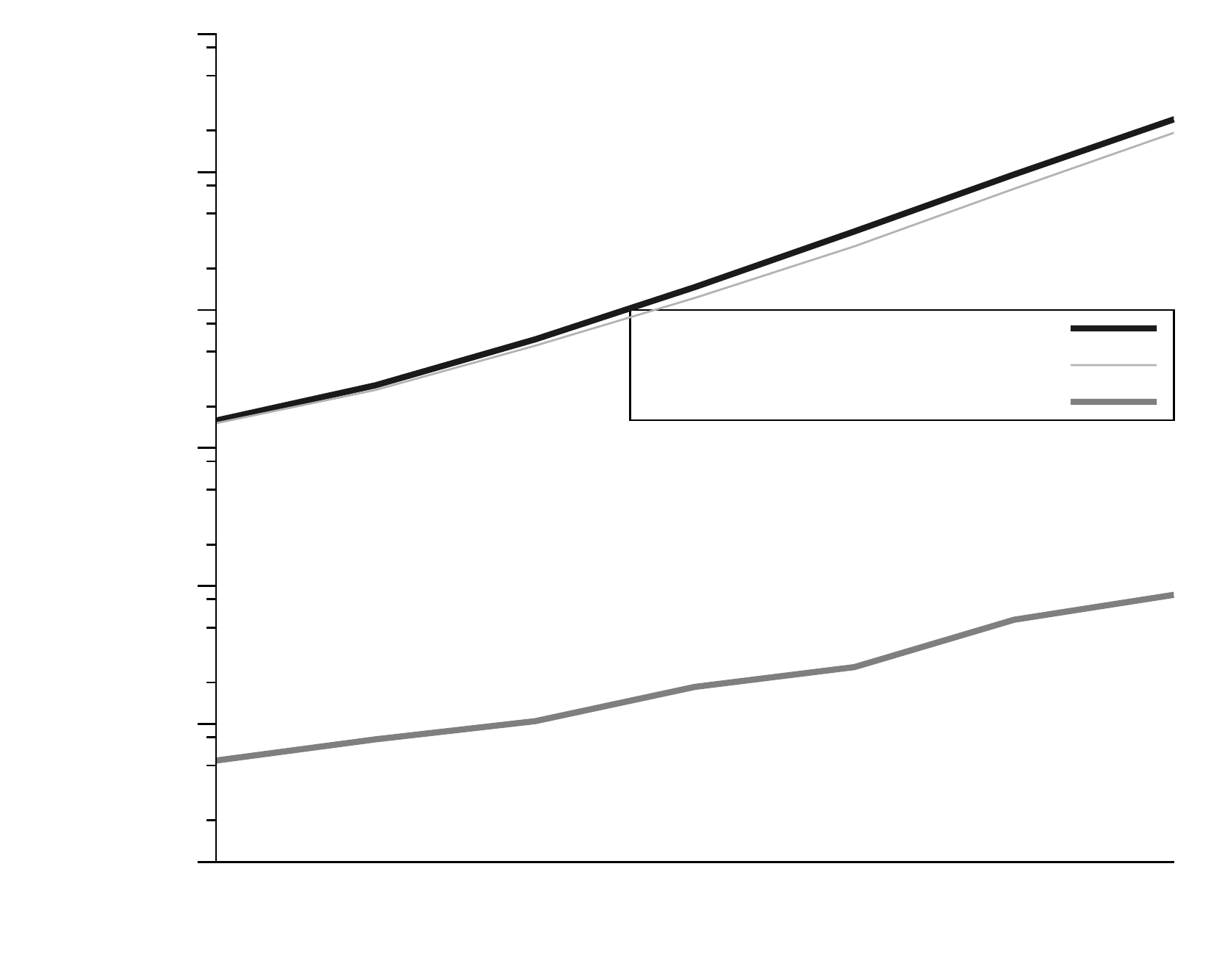
		\caption{Computing time for the LP solver and the random extraction for a full rank interval matrix of size $i\times i$.}
		\label{fig:times1}
	\end{scriptsize}
	\end{figure}	
	Figure~\ref{fig:times1} depicts a first experiment showing the average execution time as a function of the dimension of the considered interval matrix for the three methods.
	The constructed matrices here are square, have a dimension from 2 to 8 and are strictly dominant interval matrices ($r = m$).
	This experiment shows the exponential increase of the computing time needed while the dimension of the initial matrix for the methods using an LP solver and the apparently better behaviour of the combinatorial method (with a fixed number of iterations).	
%	In different cases, the method searching for an H-matrix is not able to find the entire interval matrix as a regular one.
	\begin{figure} 
	\begin{scriptsize}
		\centering 
		\def\svgwidth{.8\columnwidth} 
		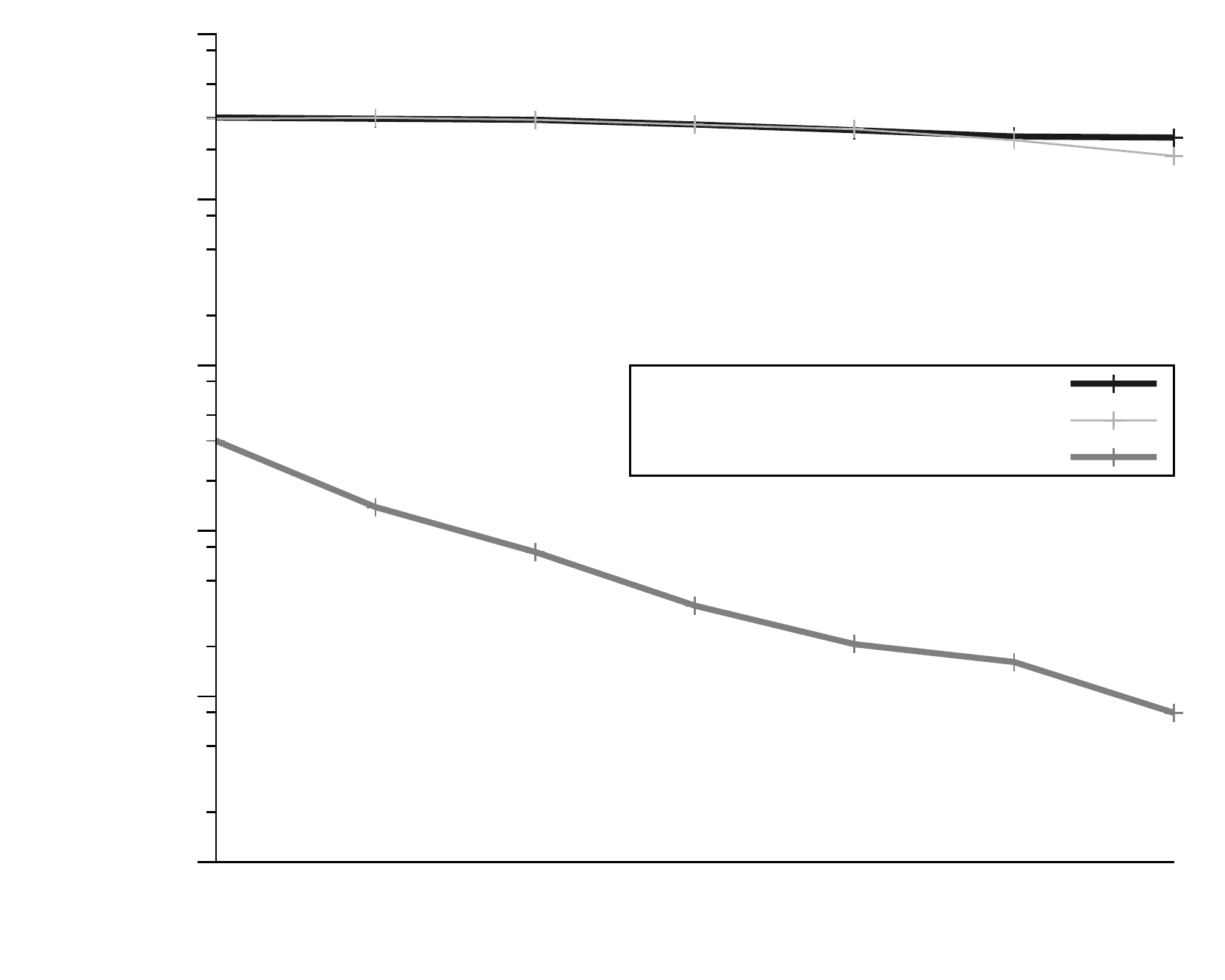
		\caption{Average computing time (in seconds) over iterations of the LP solver for the search of H-sub-matrices and strictly diagonally dominant sub-matrices, and for the random sub-matrix extraction for an interval matrix of size $8\times 8$ as a function of the minimum expected rank.}
		\label{fig:times2}
	\end{scriptsize}		
	\end{figure}
	Figure~\ref{fig:times2} shows average computing times of the LP solver for the search of H-sub-matrices and strictly diagonally dominant sub-matrices, and for the random sub-matrix extraction for an interval square matrix of dimension 8.
	In this case the matrix created randomly is constructed with a known minimum rank $r$ with $2 \leqslant r \leqslant 8$ that is the size of the interval H sub-matrix created.
	Using the LP solver, the execution time is more or less constant.
	We stress here the fact that the method using random extraction becomes faster for larger ranks because they are detected very early with Algorithm~\ref{algoCombi} and because the matrix can contain components equal to 0, the method using H-matrices in the case where $u$ is fixed as presented can no longer be applied since a division by 0 can occur in the linear program \eqref{eq:lp2}.
	
	Finally two last experiments are provided: they use a different technique to construct the interval matrix.
	To test the efficiency of the proposed algorithms, matrices with known rank $r$ are built.
	For that purpose, a triangular matrix $A$ (with $(A)_{ii} \neq 0,\ 1 \leqslant i \leqslant r$) is first created.
	It is the upper left part of a matrix  $J \in \R^{n\times n}$ with $n \geqslant r$.
	The $(n-r)$ remaining columns of $J$ are created as linear combinations of the $r$ first columns.
	Then a sequence of rotations are applied to the matrix $J$ (here we applied $n+1$ rotations).
	An angle $\theta_i$ is chosen randomly with $0 \leqslant \theta_i \leqslant \pi$ to compose the rotation matrix.
	A pair $(k,l)$ with $k \neq l$ and $1 \leqslant k, l \leqslant n$ is chosen randomly and the rotation is applied for coordinates $(k, l)$ and $(l, k)$.
	The matrix $A$ constructed in this way is not interval.
	
	Figure~\ref{fig:ranks} shows the results for a first experiment using this matrix construction which has the advantage to let us know the expected rank of the results.
	Using Theorem~\ref{theo:levy} as a criteria for sub-matrix extraction generally leads to a sub-matrix of dimension less than the rank of the matrix that is worse than the combinatorial method.
	\begin{figure} 
	\begin{scriptsize}
		\centering 
		\def\svgwidth{0.8\columnwidth} 
		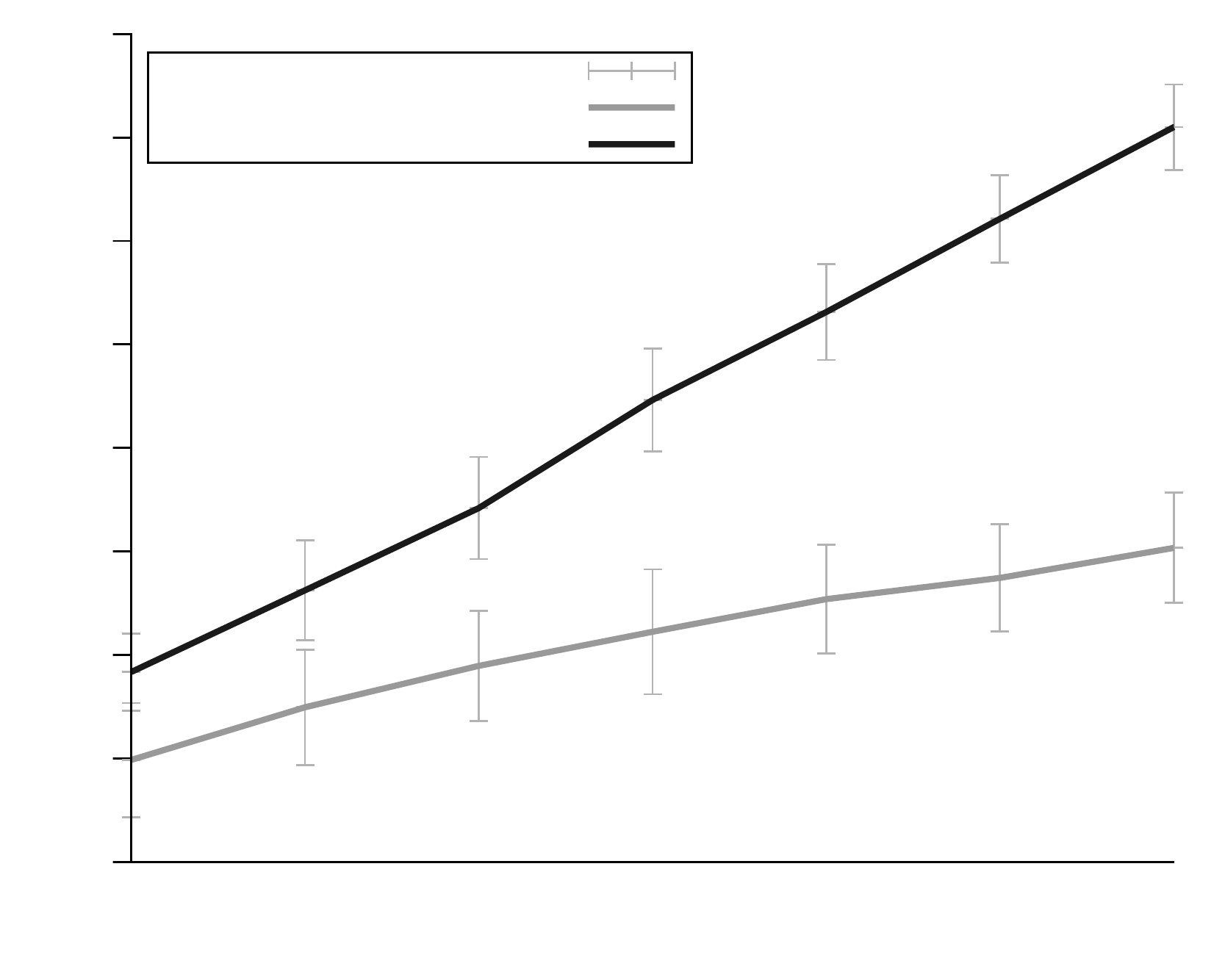
		\caption{Execution from a matrix $J \in \R^{8\times 8}$ of constant rank $i = 2, \dots, 8$. Results are average values over 200 computations.}
		\label{fig:ranks}
			\end{scriptsize}
	\end{figure}
	
	The matrix construction can be used to show the impact of the interval width of the components of the matrix on the method efficiency.
	For the last experiment, the same matrix construction than the previous one is used except that the components of the matrix constructed are thickened to obtain an interval matrix (the interval $\inter{-0.25, 0.25}$ is added to each component of the matrix for one experiment and $\inter{-0.5, 0.5}$ for another).
	Results are shown in Figure~\ref{fig:ranks2}.
	
	\begin{figure} 
	\begin{scriptsize}
		\centering 
		\def\svgwidth{0.8\columnwidth} 
		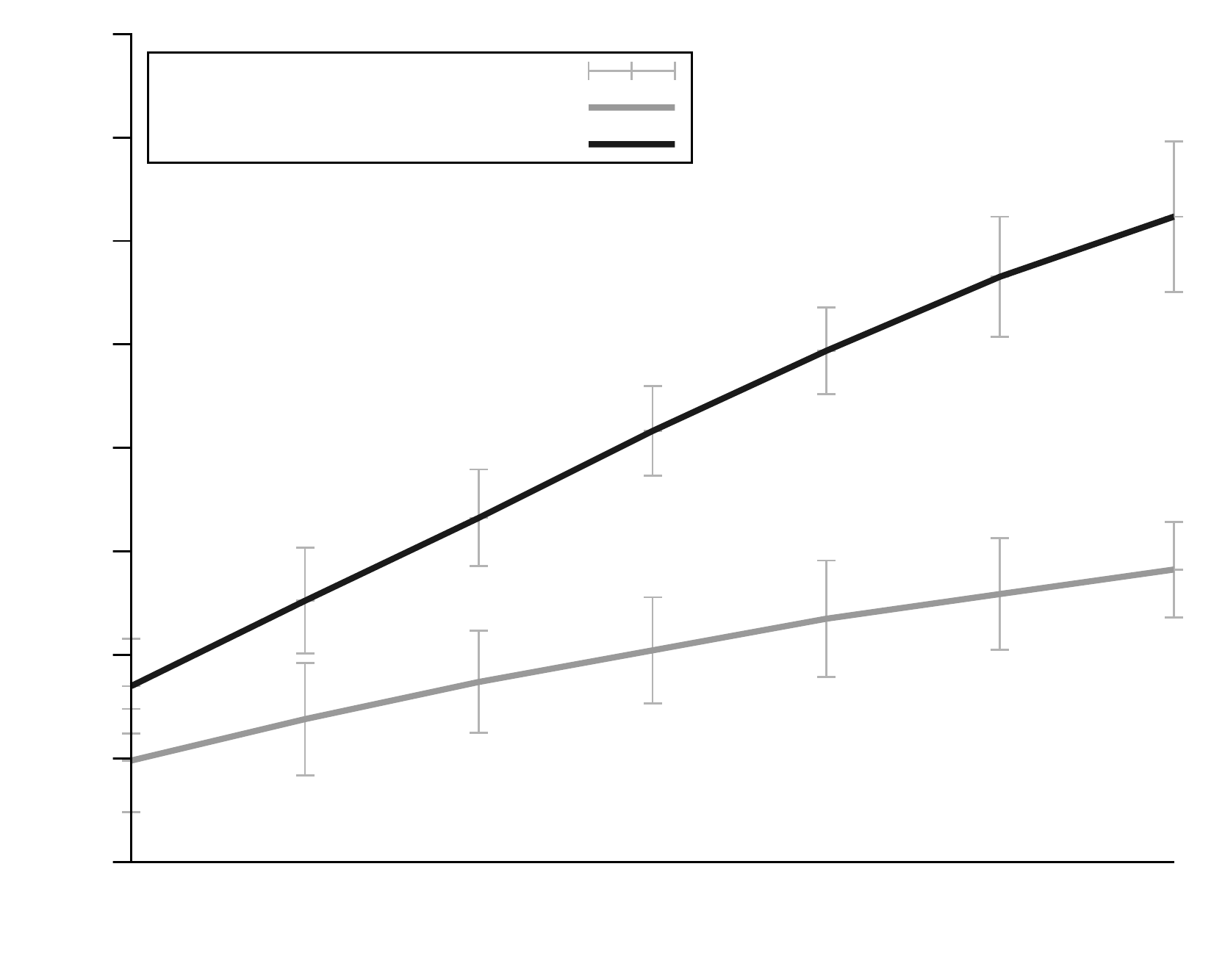
		\def\svgwidth{0.8\columnwidth} 
		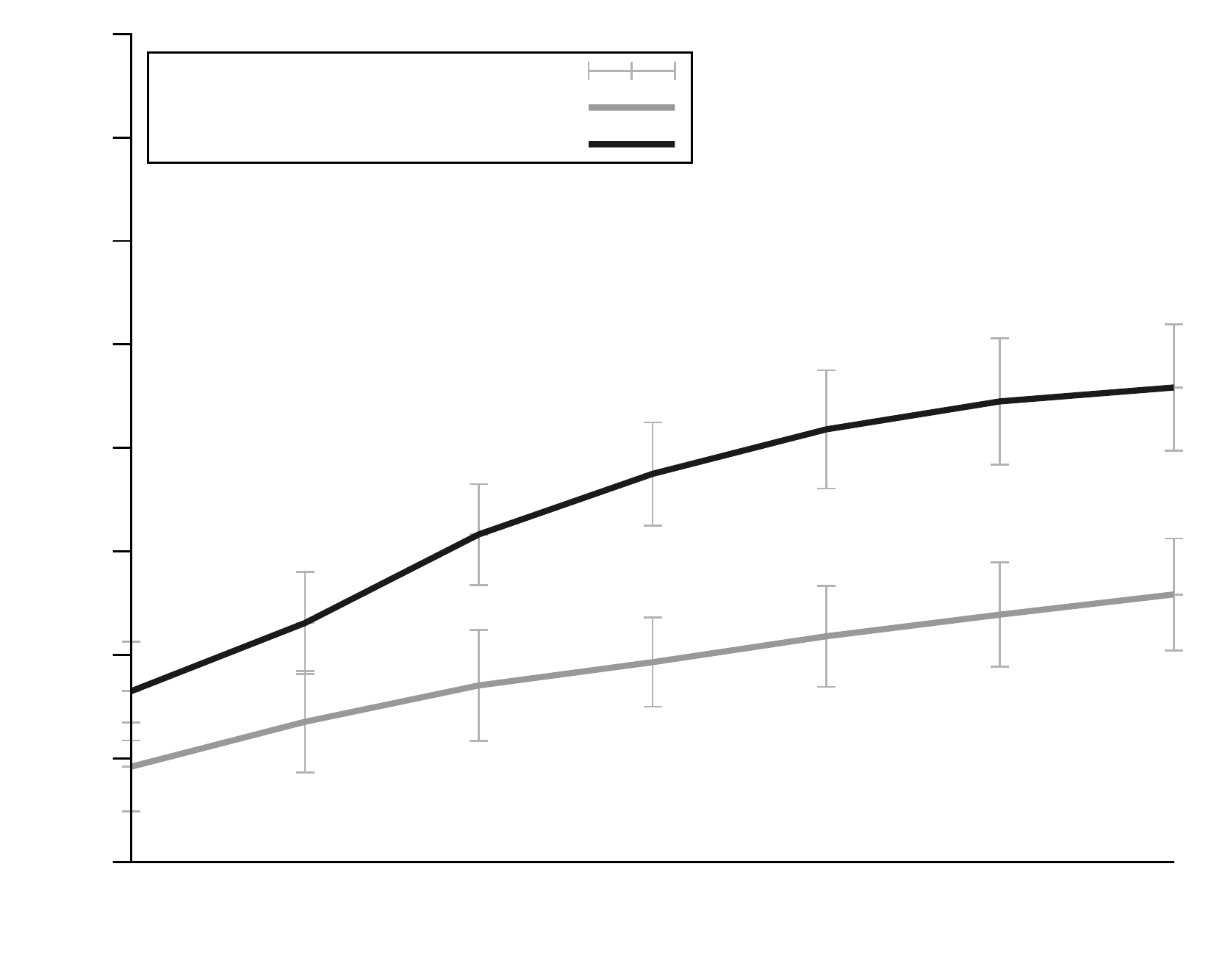
		\caption{Estimate of the rank of an interval matrix $\vA \in \IR^{8\times 8}$ of constant rank $i = 1, \dots, 8$. Upper picture: $\inter{-0.25, 0.25}$ is added to each component, lower picture: $\inter{-0.5, 0.5}$ is added to each component  Results are average values over 200 evaluations. Standard deviation is provided.}
		\label{fig:ranks2}
			\end{scriptsize}
	\end{figure}
	
	These experiments show that the combinatorial method is better than the method extracting strictly diagonally dominant sub-matrices.
	This is due to Theorem~\ref{thm:regular} which can detect a bigger subset of regular matrices than just strictly diagonally dominant ones.	
	
%************************************************************************************************************************************************************************	
\subsubsection*{Limitations}
%************************************************************************************************************************************************************************	
	As previously mentioned, this method cannot guarantee to obtain the sub-matrix of maximum rank.
	For a given function $f : \mathcal{D} \subseteq \R^m \rightarrow \R^n$ of constant rank $r$, only $p \leqslant r$ components can be detected from the LP program \eqref{eq:lp}.
	An inner approximation will be obtained only for these $p$ components of $f$.
	
%************************************************************************************************************************************************************************
\section{Computation of an inner approximation of the range of a vector-valued function}	
%************************************************************************************************************************************************************************
	\label{sec:app}
	This section shows some results of the computation of inner approximations of immersions and submersions.
	The functions considered in these examples satisfy \eqref{eq:inv} with $r$ known.

%************************************************************************************************************************************************************************	
\subsection{Immersion}
%************************************************************************************************************************************************************************ 	
 	\label{subsec:imm}
	Consider the problem of finding the range of the function 
	\begin{align}
		f:\R^2	&	\rightarrow \R^3\nonumber\\
		(u,v)		&	\mapsto	
		\left\lbrace
			\begin{array}{l}
				f_1(u,v) = \cos(u)\cos(v)\\
				f_2(u,v) = \sin(u)\cos(v)\\
				f_3(u,v) = \sin(v)		
			\end{array}
		\right.
		\label{ex:immersion}	
	\end{align}
	over the box $(\vu, \vv) = (\left[\frac{3\pi}{2}+\tau,2\pi-\tau\right];\left[\tau,\frac{\pi}{2}-\tau\right]),\ \tau>0$.
	$f$ is of constant rank 2 in 	$(\vu, \vv)$.
	The rank is equal to the dimension of the domain of $f$, it is then an \emph{immersion}.
	Corollary~\ref{cor:rang} can be used to get an inner approximation of the range of two components of $f$.
	Here we compute the range of the two first components, but the two last or the first and the last components could also be considered.
	
	\begin{figure} 
	\begin{scriptsize}
		\centering 
		\def\svgwidth{0.6\columnwidth} 
		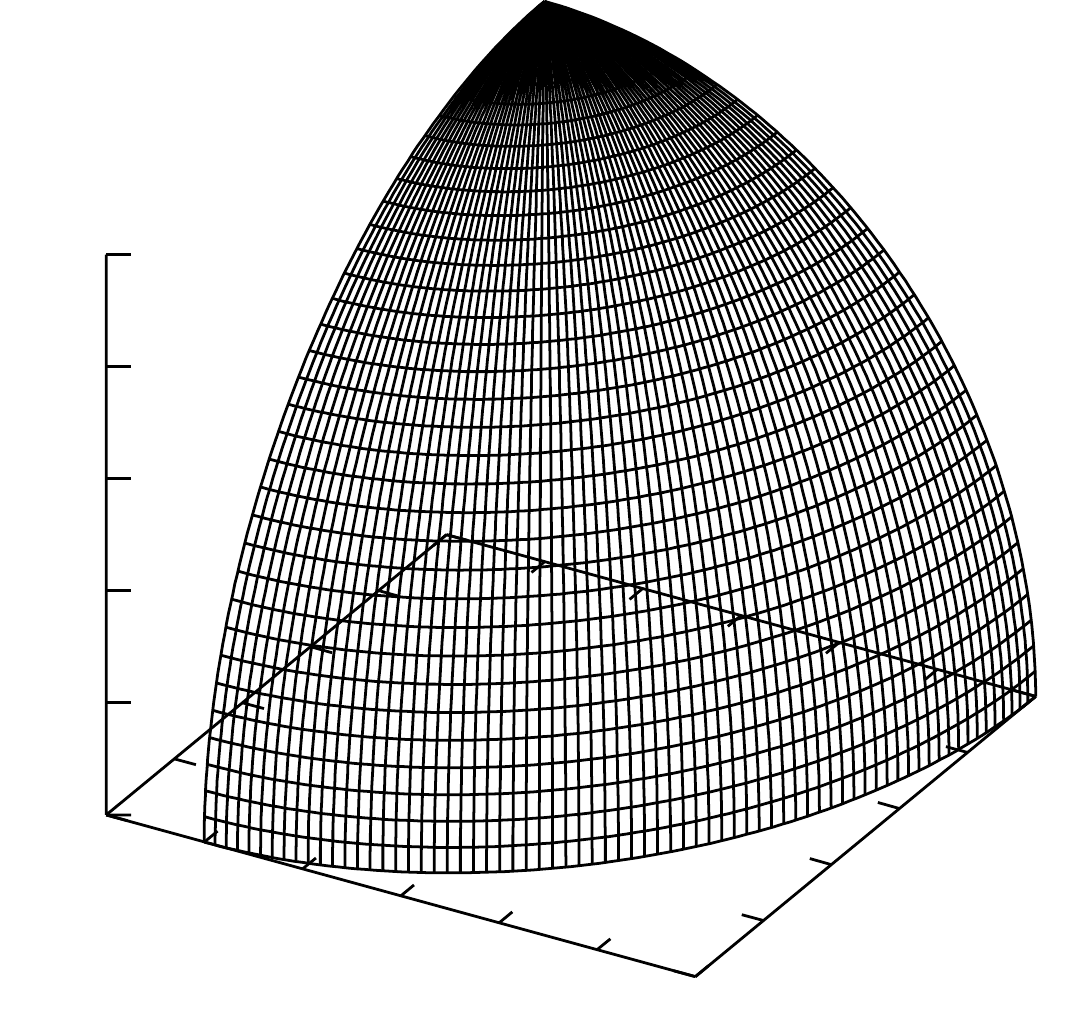
		\caption{Example 1: Range of the immersion defined in (\ref{ex:immersion}) for the initial domain $(\vu, \vv) = (\left[\frac{3\pi}{2}+\tau,2\pi-\tau\right];\left[\tau,\frac{\pi}{2}-\tau\right]),\ \tau>0$.}
		\label{fig:boule}
			\end{scriptsize}
	\end{figure}
	Figure~\ref{fig:boule} represents the image of $(\vu, \vv)$ by $f$ which has no volume in its co-domain $\R^3$.
	Results of the computation of an inner approximation of this range are shown in Figure~\ref{fig:result} for different values of $\varepsilon$ in Algorithm~\ref{algo:bis}.
	The smaller the $\varepsilon$, the more accurate the inner approximation and the longer the computing time.
	In Figure~\ref{fig:result}, empty boxes in gray represent boxes Algorithm~\ref{algo:bis} was unable to prove to be in the range.
	Black boxes all belong to the range.
	
	\begin{figure} 
	\begin{tiny}
		\centering 
		\begin{tabular}{ccc}
			\def\svgwidth{0.3\columnwidth} 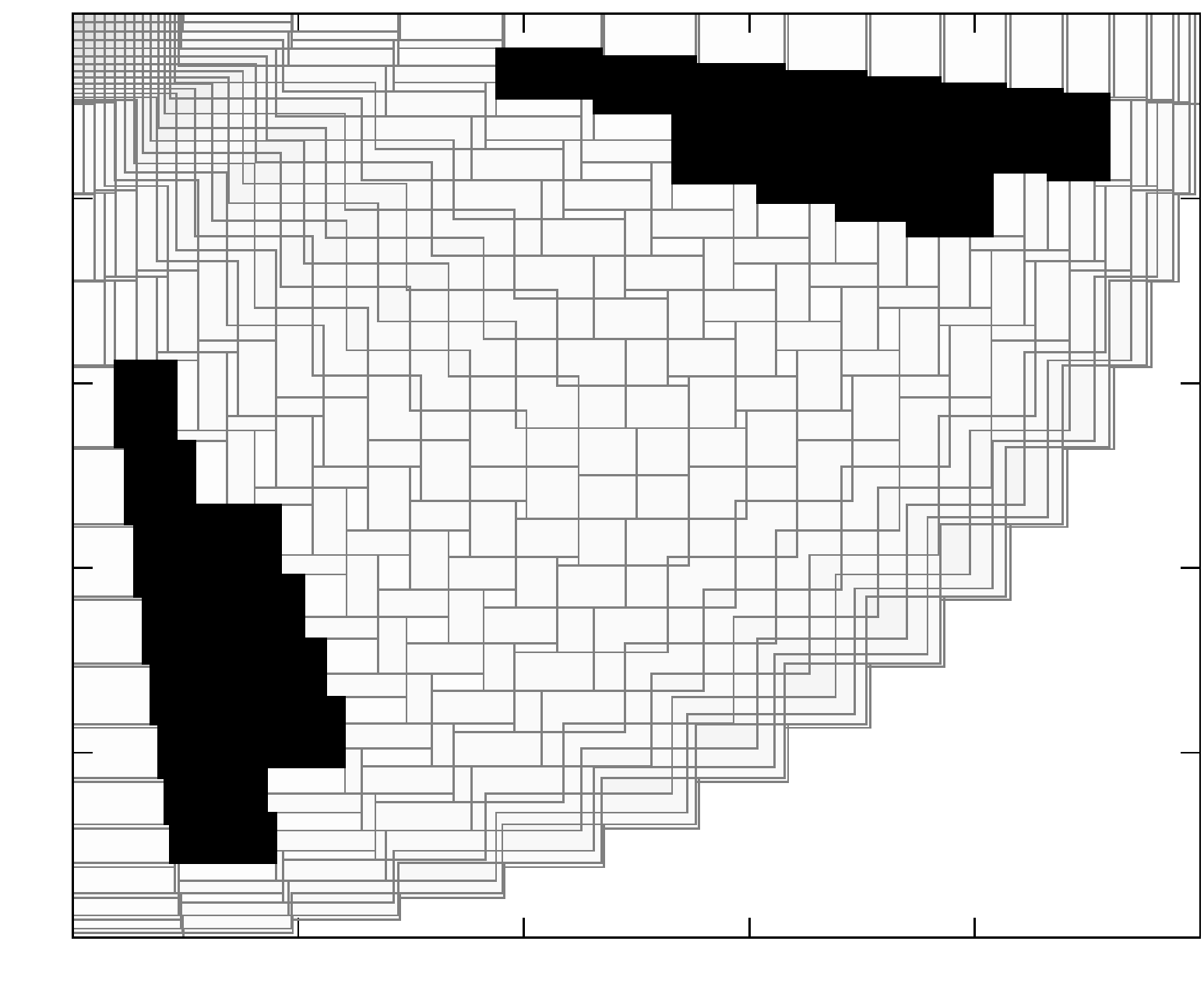&
			\def\svgwidth{0.3\columnwidth} 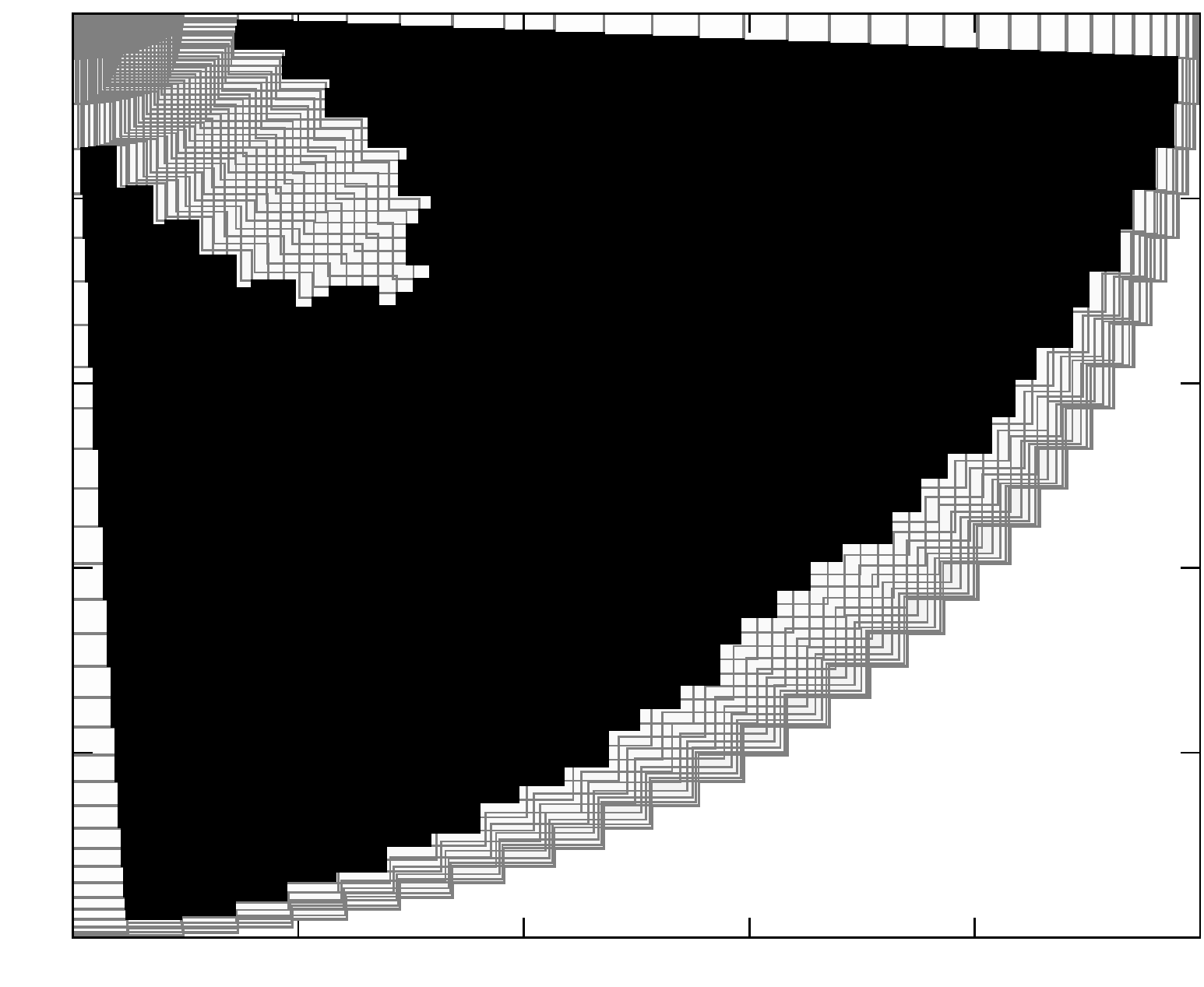& 
			\def\svgwidth{0.3\columnwidth} 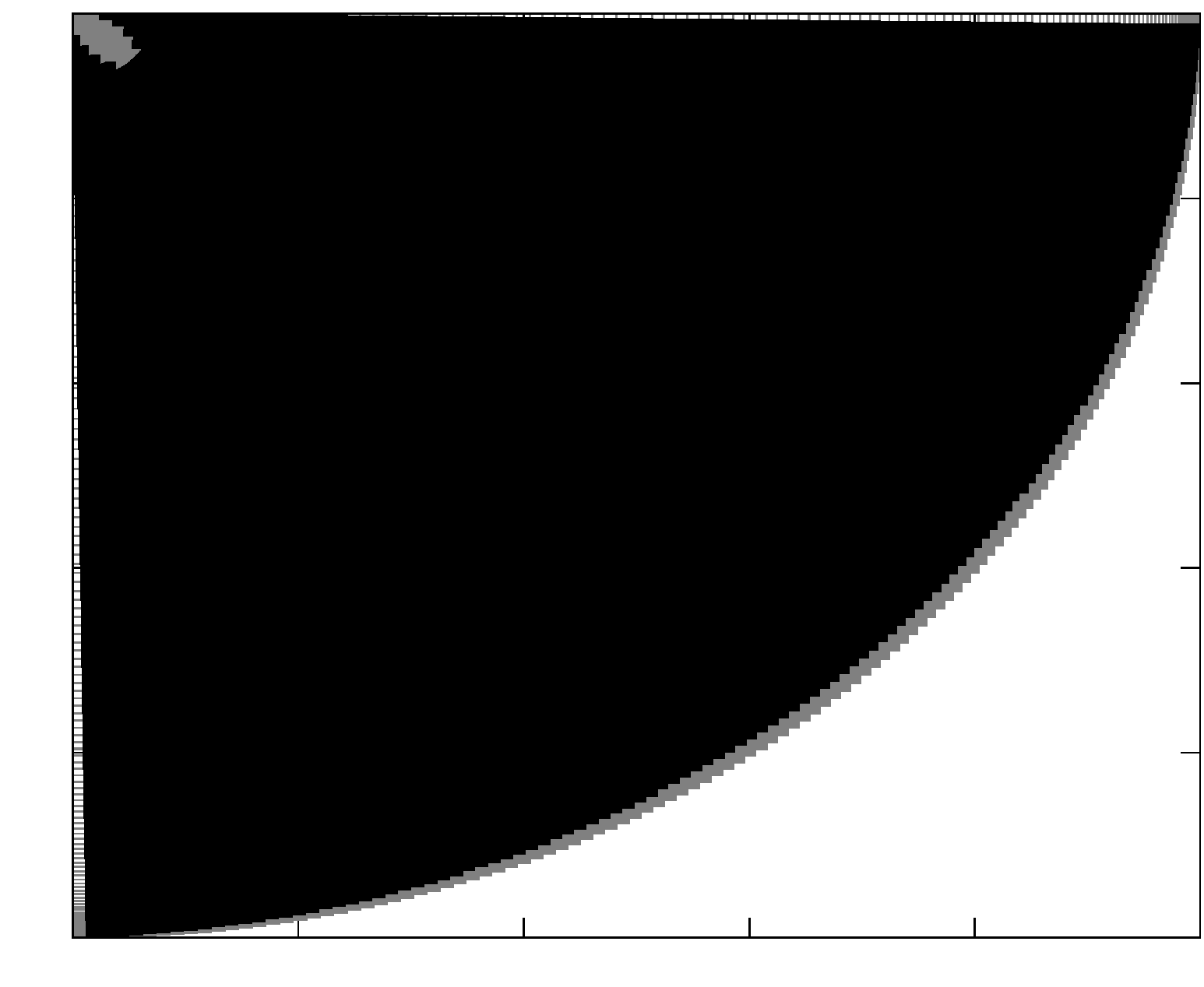
		\end{tabular} 
		\caption{Example 1: Results of the computation of an inner approximation for different values of parameter $\varepsilon$ (0.1, 0.06, and 0.02).}
		\label{fig:result}
	\end{tiny}	
	\end{figure}

	The left part of Figure~\ref{fig:result} is for $\varepsilon = 0.1$, the middle part for $\varepsilon = 0.06$ and the last part  for $\varepsilon = 0.02$.
	The computing times are respectively 0.026 s, 0.10 s, and 0.64 s.
	
%************************************************************************************************************************************************************************	
	\subsection{Submersion}
%************************************************************************************************************************************************************************	
	Consider now the computation of an inner approximation of the range of the function
	\begin{align}
		f: (\vx, \vy, \vz) \subseteq \R^3	&	\rightarrow \R^2\nonumber\\
		(x,y,z)		&	\mapsto	
		\left\lbrace
			\begin{array}{l}
				(x+r\cos(z))\cos(y)\\
				(x+r\sin(z))\sin(y)
			\end{array}
		\right.	
		\label{ex:sub}
	\end{align}
	with $(\vx, \vy, \vz) = (\inter{2, 4.5}, \inter{0, 2\pi - \tau}, \inter{0, 2 \pi - \tau})$, $\tau = 10^{-3}$.
	Figure~\ref{fig:donut} represents the range of $f$ and Figure~\ref{fig:result2} represents different computations of an inner approximation according to the parameter $\varepsilon$ in Algorithm~\ref{algo:bis}.
	On Figure~\ref{fig:result2}a $\varepsilon = 0.5$ and it took 0.18s to get the result.
	For Figure~\ref{fig:result2}b, $\varepsilon = 0.3$ and computation time is  6.25s.
	In Figure~\ref{fig:result2}c, it took 145.53s with $\varepsilon = 0.1$ to get these results.
	Finally Figure~\ref{fig:result2}~d is for $\varepsilon = 0.05$ and the computing time is 838.67s.
	The time needed for computation is longer for this experiment than for the previous one on immersion.
	It is due to the fact that the Jacobian of $f$ is not of full rank in the entire domain $(\inter{2, 4.5}, \inter{0, 2\pi - \tau}, \inter{0, 2 \pi - \tau})$.
	It is why an area (cf. Figure~\ref{fig:result2}d remains out of the range of $f$.
	
	\begin{figure} 
	\begin{scriptsize}
		\centering 
		\def\svgwidth{0.3\columnwidth} 
		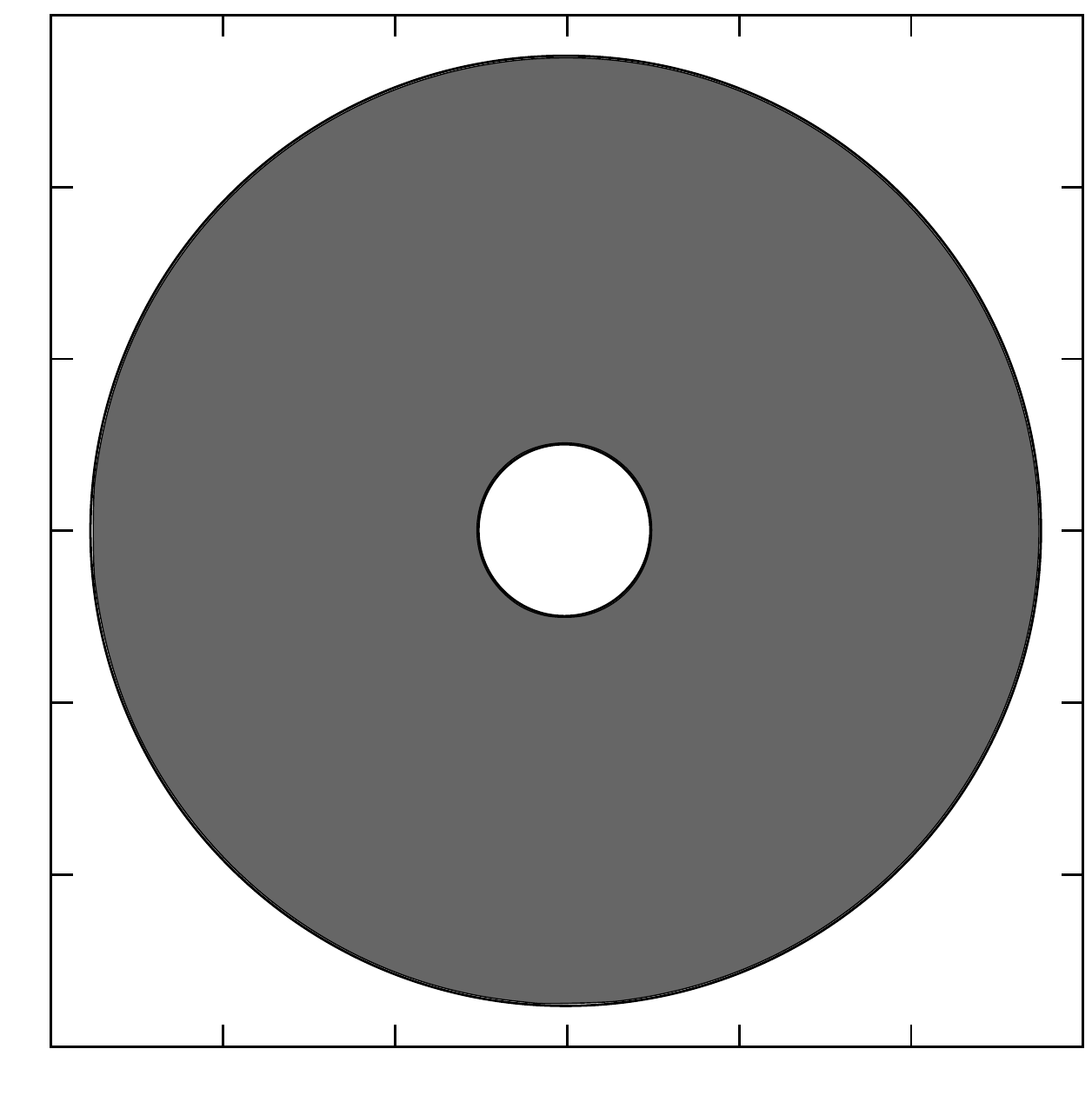
		\caption{Example 2: Range of the submersion defined in (\ref{ex:sub}) for the initial domain $(\vx, \vy, \vz) = (\inter{2, 4.5}, \inter{0, 2\pi - \tau}, \inter{0, 2 \pi - \tau})$, $\tau = 10^{-3}$.}
		\label{fig:donut}
			\end{scriptsize}
	\end{figure}
	\begin{figure}
	\begin{scriptsize}
		\centering 
		\begin{tabular}{cc}
			\textbf{a)}\def\svgwidth{0.4\columnwidth} 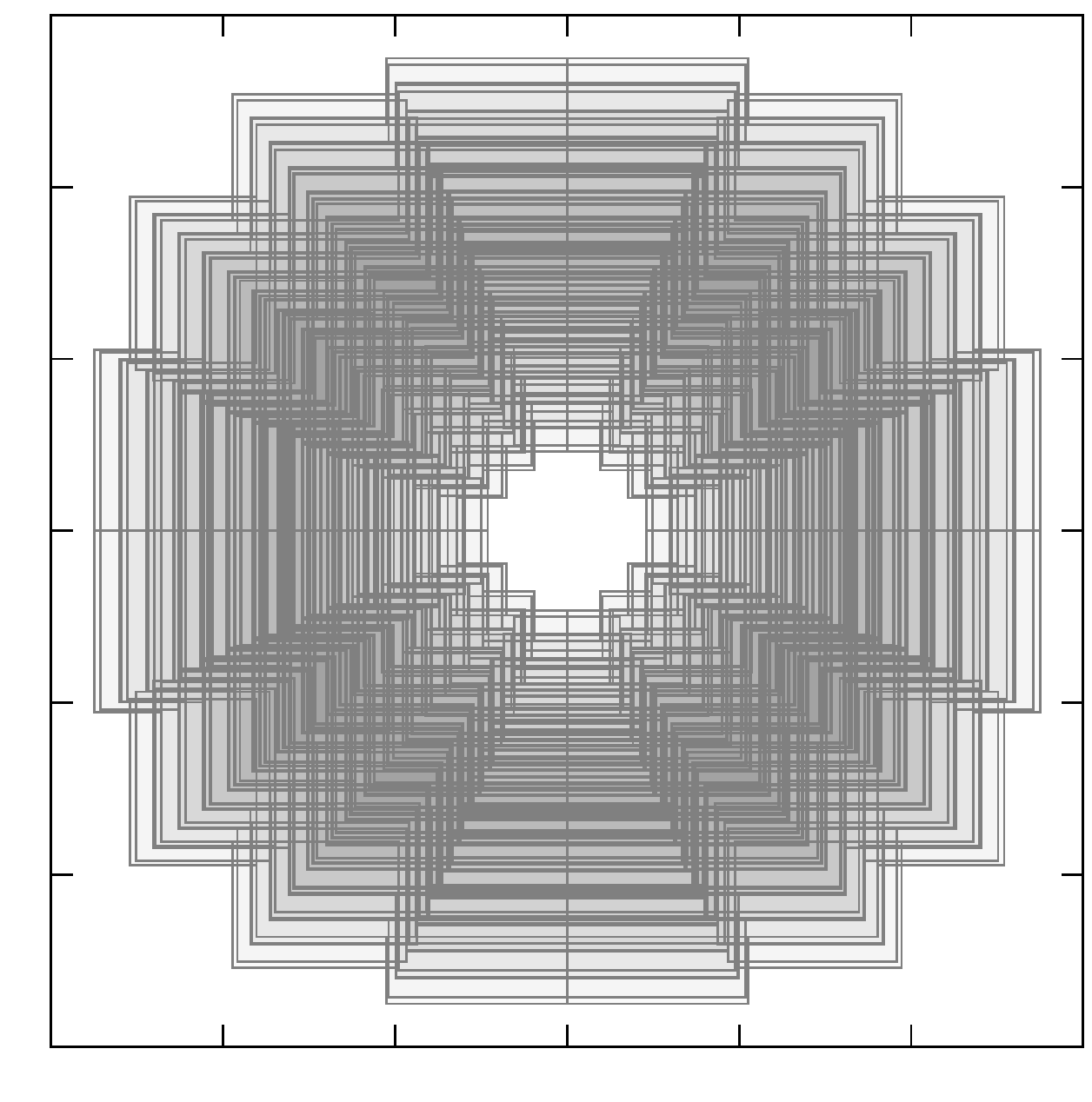&
			\textbf{b)}\def\svgwidth{0.4\columnwidth} 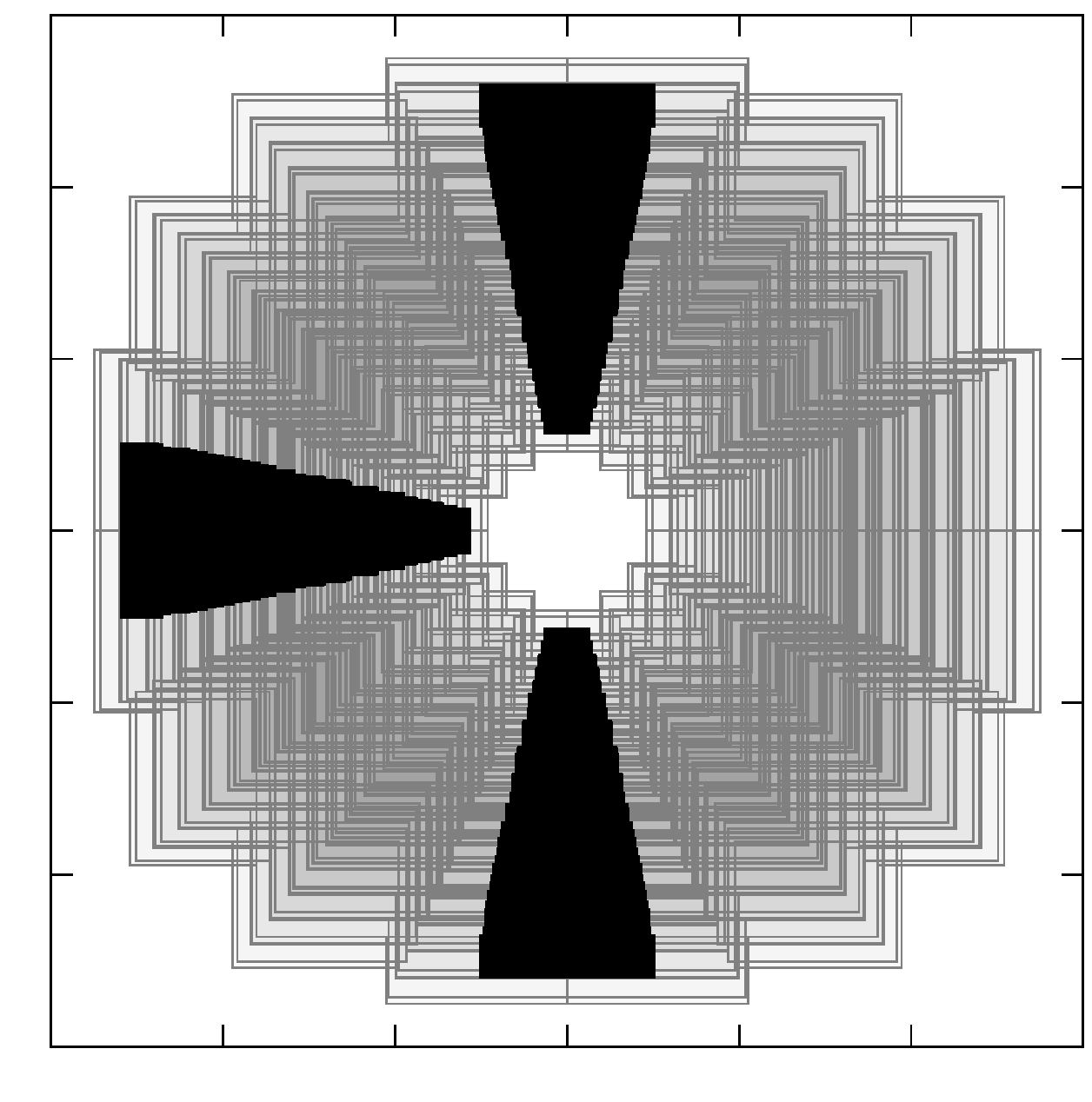\\
			\textbf{c)}\def\svgwidth{0.4\columnwidth} 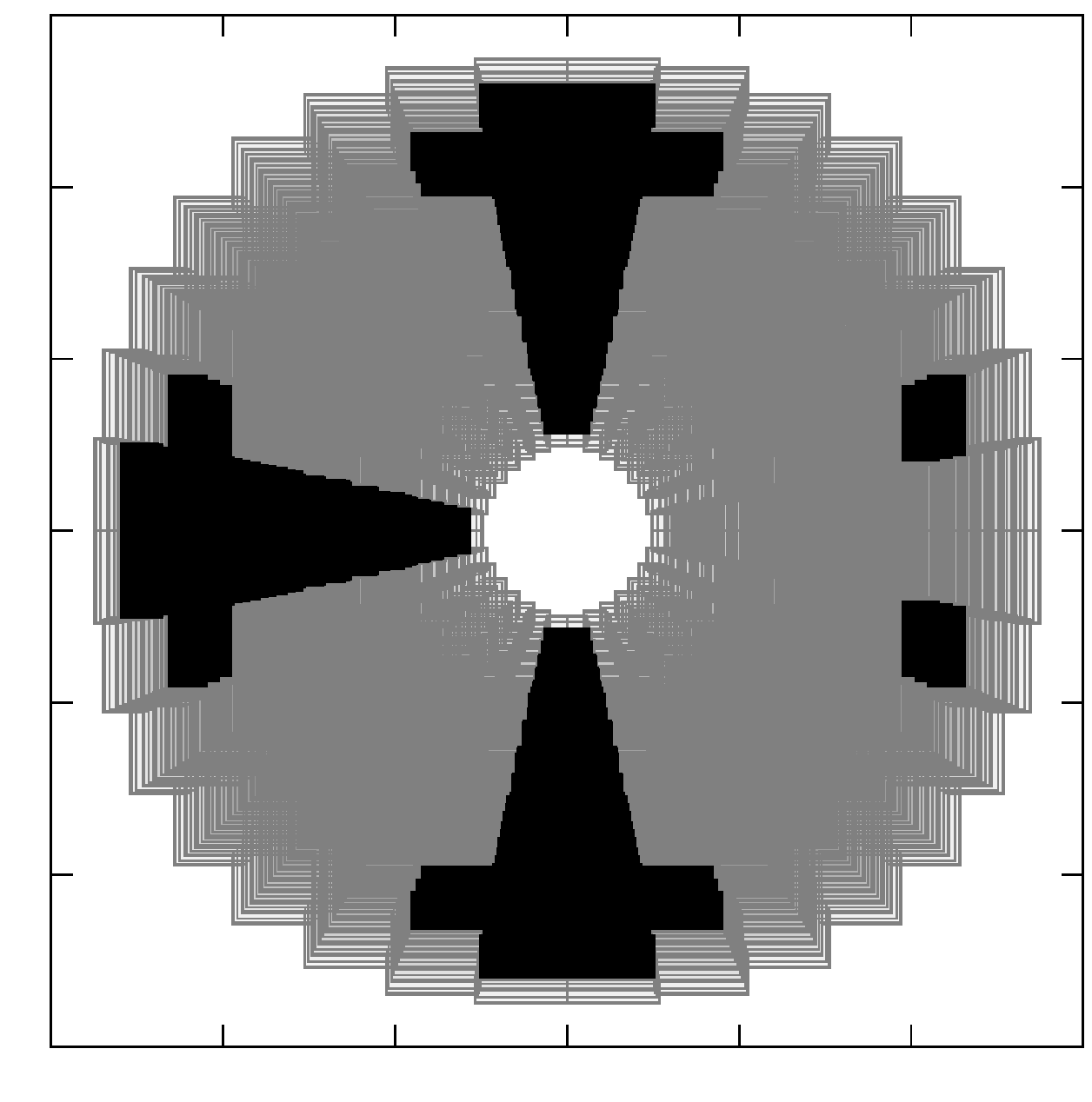&
			\textbf{d)}\def\svgwidth{0.52\columnwidth} 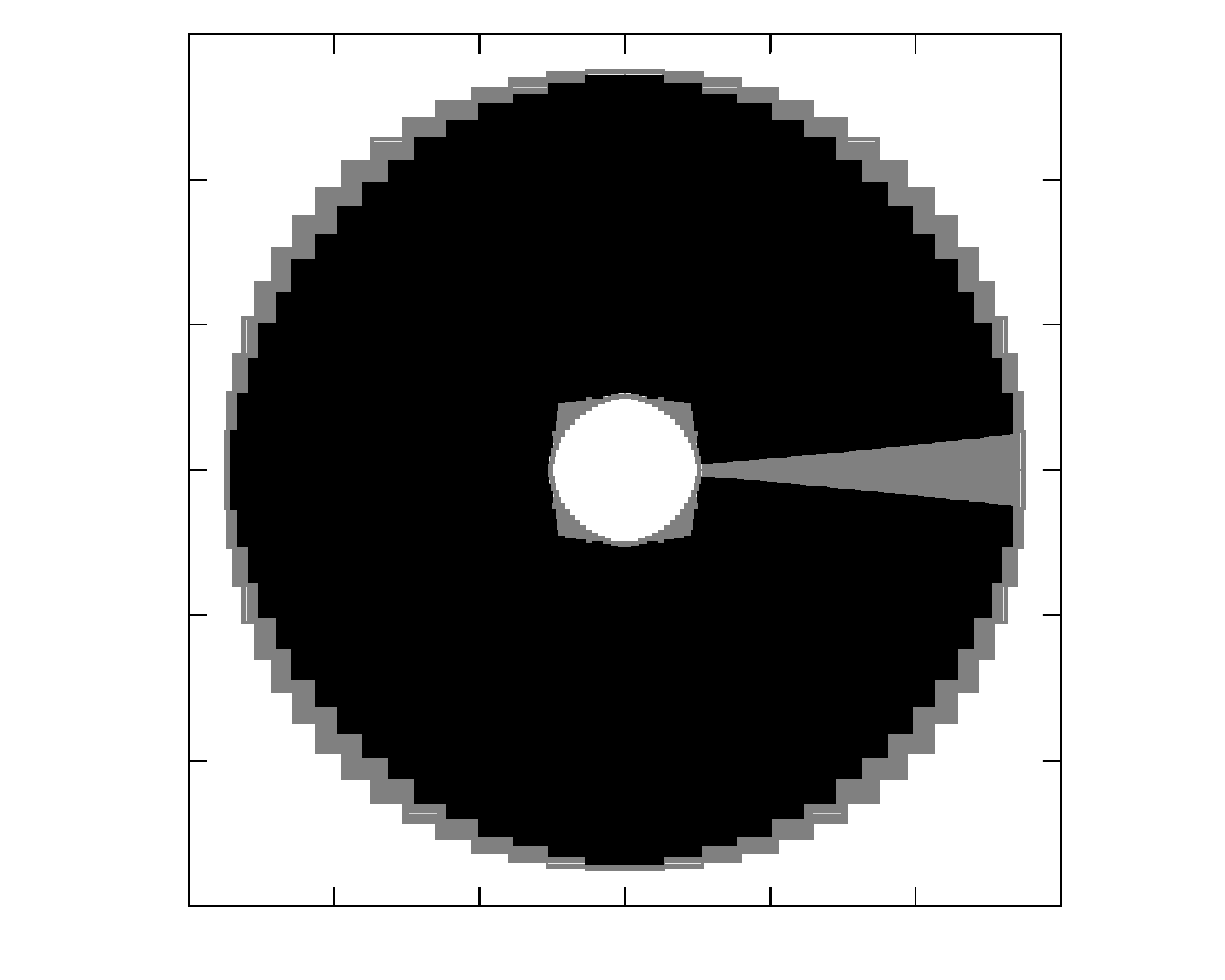
		\end{tabular} 
		\caption{Example 2: Results of computation of an inner approximation according to the parameter $\varepsilon$ in Algorithm~\ref{algo:bis} ($\varepsilon = 0.5$, 0.3, 0.1 and 0.05).}
		\label{fig:result2}
	\end{scriptsize}	
	\end{figure}

%************************************************************************************************************************************************************************	
\section{Conclusion}
%************************************************************************************************************************************************************************

	Goldsztejn and Jaulin in~\cite{Goldsztejn2010} proposed a way to compute an inner approximation of the range of a vector-valued function.	
	This paper provides an algorithm to evaluate inner approximations of the range of vector-valued functions without restriction on the dimension of its domain and co-domain.
	Using the proposed algorithm, one is able, for functions from $\R^m$ to $\R^n$ to evaluate an inner approximation of the projection of the range of $f$ on at most $r$ components, where $r$ is the rang of the Jacobian matrix of $f$.
	In the general case, this rank $r$ is unknown a priori, it is thus necessary to develop several techniques to extract a sub-matrix of maximal rank from a given interval matrix.
	The restriction of this method is providing an inner approximation of at most $r$ components of the function if this function has a constant rank $r$.
	
%************************************************************************************************************************************************************************
	\bibliographystyle{plain}
	\bibliography{references}	
%************************************************************************************************************************************************************************	
%\end{spacing}	
\end{document}